\newcommand{\sumprime}{\if@display\sideset{}{'}\sum%
            \else\sum'\fi}
\begin{document}

\numberwithin{equation}{section}

\newtheorem{theorem}{Theorem}[section]
\newtheorem{proposition}[theorem]{Proposition}
\newtheorem{conjecture}[theorem]{Conjecture}
\def\theconjecture{\unskip}
\newtheorem{corollary}[theorem]{Corollary}
\newtheorem{lemma}[theorem]{Lemma}
\newtheorem{observation}[theorem]{Observation}
\newtheorem{definition}[theorem]{Definition}

\thanks{The first author is supported by NSF Grant 11771089 and Gaofeng grant  from School of Mathematical Sciences, Fudan University}

\address{Department of Mathematical Sciences, Fudan University, Shanghai, 20043, China}

\email{boychen@fudan.edu.cn}

\address{Department of Mathematical Sciences, Norwegian University of Science and Technology No-7491 Trondheim, Norway}

\email{xu.wang@ntnu.no}

\title[Bergman kernels]{Bergman kernel and oscillation theory of plurisubharmonic functions}

 \author{Bo-Yong Chen and Xu Wang}
\date{\today}

\begin{abstract} Based on Harnack's inequality and  convex analysis we  show that each plurisubharmonic function is locally BUO (bounded upper oscillation) with respect to  polydiscs of finite type but not for arbitrary polydiscs. We also show that each function in the Lelong class is globally BUO with respect to all polydiscs. A dimension-free BUO estimate is obtained for the logarithm of the modulus of a complex polynomial.  As an application we obtain an approximation formula for the Bergman kernel   that preserves all directional Lelong numbers. For smooth plurisubharmonic functions we derive a new asymptotic identity for the Bergman kernel  from  Berndtsson's complex Brunn--Minkowski theory, which also yields a slightly better version of the sharp Ohsawa--Takegoshi extension theorem in some special cases.

\bigskip

\noindent{{\sc Mathematics Subject Classification} (2010): 32A25, 53C55.}

\smallskip

\noindent{{\sc Keywords}: BUO,  plurisubharmonic function, Bergman kernel, Remez inequality, directional Lelong number, complex Brunn--Minkowski theory, Ohsawa--Takegoshi theorem.}
\end{abstract}
\maketitle

\section{Introduction}

Let $\Omega$ be a domain in $\mathbb C^n$ and $PSH(\Omega)$ the set of plurisubharmonic (psh) functions on $\Omega$.  Recall that each $\phi\in PSH(\Omega)$ satisfies the following mean-value inequality:
$$
\phi(z)\le \frac1{|S|} \int_S \phi=:\phi_S
$$
whenever $S$ is a ball or a polydisc, with center $z$. Here $|S|$ denotes the Lebesgue measure of $S$ and $\int_S$ means the Lebesgue integral. The above inequality implies  $\phi\in L^1_{\rm loc}(\Omega)$ and  suggests to estimate the difference $|\phi-\phi_S|$. The concept of BMO functions then enters naturally. Let ${\mathcal S}={\mathcal S}(\Omega)$ be a family of relatively compact open subsets in $\Omega$.
 We say that $\phi\in L^1_{\rm loc}(\Omega)$ has \emph{bounded mean oscillation} (BMO) with respect to ${\mathcal S}$ if
$$
{\sup}_{S\in {\mathcal S}}\,  MO_{S}(\phi) <\infty,\ \ \  MO_{S}(\phi):=\frac1{|S|} \int_S |\phi-\phi_S|.
$$
Let $BMO(\Omega,{\mathcal S})$ denote the set of functions which are BMO with respect to ${\mathcal S}$. When ${\mathcal S}$ is the set of balls in $\Omega$, this is  the original definition of BMO functions due to John-Nirenberg \cite{JN}. A classical example of BMO functions is $\log |z|$. It is also convenient to introduce local BMO functions as follows. For an open set $\Omega_0\subset\subset \Omega$ we define ${\mathcal S}|_{\Omega_0}$ to be the sets of all $S\in {\mathcal S}$ which are relatively compact in $\Omega_0$. Let $BMO_{\rm loc}(\Omega,{\mathcal S})$ be the set of functions on $\Omega$ which belong to $BMO(\Omega_0,{\mathcal S}|_{\Omega_0})$ for every  open set $\Omega_0\subset\subset \Omega$.

By using pluripotential theory, Brudnyi \cite{Brudnyi} was able to show that each psh function is locally BMO with respect to balls (see also \cite{Brudnyi1} for stronger results concerning subharmonic functions in the plane).  Recently, the first author found another approach to local BMO properties of psh functions by using the Riesz decomposition theorem and some basic facts of psh functions (cf. \cite{Chen}). Benelkourchi et al. \cite{BJZ} showed that every function in the Lelong class ${\mathcal L}$ is globally BMO with respect to balls. Recall that
$$
{\mathcal L}=\left\{u\in PSH(\mathbb C^n): {\limsup}_{|z|\rightarrow \infty}\, (u(z)-\log |z|)<\infty\right\}.
$$

In this paper we propose a new and simpler approach based on the following basic observation:

\smallskip

\emph{It is easier to look at the upper oscillation instead of the mean oscillation for psh functions.}

\smallskip

To define the \emph{upper oscillation} one simply uses  $\sup_S \phi$ instead of  $\phi_S$:
\begin{equation}\label{eq:2}
UO_{S}(\phi):=\frac1{|S|} \int_S \left|\phi-{\sup}_S\, \phi\right|={\sup}_S\, \phi-\phi_S.
\end{equation}
Note that $-UO_S(-\phi)$ is exactly  the lower oscillation introduced by Coiffman-Rochberg (cf. \cite{CR}, see also \cite{Ose13} for further properties). Since
\begin{equation}\label{eq:3}
MO_{S}(\phi)=\frac1{|S|} \int_S |\phi-\phi_S|=\frac2{|S|} \int_{\phi<\phi_S} (\phi_S-\phi) \leq 2\, UO_S(\phi),
\end{equation}
we see  that  bounded upper oscillation (BUO) implies BMO. One may define $BUO(\Omega,{\mathcal S})$ and $BUO_{\rm loc}(\Omega,{\mathcal S})$ analogously as the case of BMO.

Let ${\mathcal P}={\mathcal P}(\Omega)$ denote the set of relatively compact polydiscs in $\Omega$ and ${\mathcal P}_N$ the set of polydiscs $P\subset\subset \Omega$ of finite type $N$, i.e.,
$$
\max\{r_j\} \leq \min\{r_j^{1/N}\},
$$
where $N>0$ and $\{r_j\}_{1\le j\le n}$ is the\/ {\it polyradius}\/ of $P$.

 Based on Harnack's inequality and convex analysis, we are able to show  the following

  \begin{theorem}\label{th:Main1}
  \begin{enumerate}
  \item $PSH(\Omega)\subset BUO_{\rm loc}(\Omega,{\mathcal P}_N) \subset BMO_{\rm loc}(\Omega,{\mathcal P}_N)$;
  \item $PSH(\mathbb D^n)\nsubseteq BMO_{\rm loc}(\mathbb D^n,{\mathcal P})$ for $n\ge 2$, where $\mathbb D^n$ is the unit polydisc;
  \item  ${\mathcal L}\subset BUO(\mathbb C^n, \mathcal P)$; more precisely, for every $\phi \in PSH(\mathbb C^n)$ with
$$
\phi(z_1,\cdots, z_n) \leq c+\max_{1\leq j\leq n} \log (1+|z_j|), \ \  \forall \ (z_1, \cdots, z_n)\in\mathbb C^n,
$$
where $c$ is a constant, we have
$
UO_P(\phi) <  3^n
$
for all polydiscs $P$ in $\mathbb C^n$.
\end{enumerate}
\end{theorem}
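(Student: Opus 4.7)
The plan is to address the three parts of Theorem~\ref{th:Main1} separately, with part (1) being the main technical content.

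For part (1), fix $\Omega_0 \subset\subset \Omega_1 \subset\subset \Omega$ and $\phi\in PSH(\Omega)$; by subtracting the constant $\sup_{\Omega_1}\phi$ (which does not change $UO$), assume $\phi\le 0$ on $\Omega_1$. For a polydisc $P=P(z_0,r)\in\mathcal{P}_N|_{\Omega_0}$, my strategy is to insert the smoothed auxiliary function $u(w):=\phi_{P(w,r)}$, defined on a slightly smaller concentric subpolydisc with $P(w,r)\subset\Omega_1$: it is smooth, plurisubharmonic in $w$, satisfies $u\le 0$, and dominates $\phi$ pointwise by the sub-mean value inequality. Harnack's inequality, applied in each coordinate disc of a concentric polydisc slightly larger than $P$ still inside $\Omega_1$, then gives a comparison $\sup_{w\in P}u(w)\le C_{N,n}\,u(z_0)=C_{N,n}\,\phi_P$, with the finite-type condition $\max r_j\le(\min r_j)^{1/N}$ keeping the resulting eccentricity factors bounded by a constant depending only on $N$. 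Combining this with convex-analytic monotonicity of $r\mapsto \phi_{P(z_0,r)}$ in $\log r$ (used to compare $\phi_P$ with averages over a concentric enlargement of $P$ still in $\Omega_1$) removes the dependence on the possibly $-\infty$ value $\phi(z_0)$ and yields a uniform bound on $UO_P(\phi)$. The inclusion $BUO_{\rm loc}\subset BMO_{\rm loc}$ is immediate from~(\ref{eq:3}).

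For part (2), I would construct a $\phi\in PSH(\D^n)$ whose mean oscillation diverges on extremely eccentric polydiscs. A natural candidate is $\phi(z_1,\dots,z_n)=\log|f(z_1,z_2)|$ with $f$ holomorphic and vanishing along a curve with arbitrarily high-order tangential contact with a coordinate hyperplane, e.g.\ $f(z_1,z_2)=z_1-\sum c_k z_2^{m_k}$ with $m_k\to\infty$; testing on a sequence of polydiscs $P_k$ with radii $r_1\ll r_2^{m_k}$, which necessarily fall outside every $\mathcal{P}_N$, one expects $MO_{P_k}(\phi)\to\infty$.

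For part (3), the plan is scaling reduction followed by dimensional induction. The affine change of coordinates $\zeta_j=(z_j-z_0^j)/r_j$ sends $P$ to $\D^n$ and $\phi$ to a function $\tilde\phi$ which is psh on $\C^n$ and still satisfies a Lelong-type bound $\tilde\phi(\zeta)\le c'+\max_j\log(1+|\zeta_j|)$ for a new constant $c'$; since $UO$ is constant-shift invariant, we may suppose $c'=0$. In the base case $n=1$, convexity of circular means in $\log$-radius combined with $\phi(z)\le\log(1+|z|)$ gives $UO_D(\phi)<3$ for every disc $D\subset\C$. For the inductive step, the envelope $\psi(\zeta_2,\dots,\zeta_n):=\sup_{\zeta_1\in\D}\phi(\zeta_1,\zeta_2,\dots,\zeta_n)$ is again psh on $\C^{n-1}$ and lies in the Lelong class, and the telescoping identity
$$UO_{\D^n}(\phi)\;=\;UO_{\D^{n-1}}(\psi)\;+\;\frac{1}{\pi^{n-1}}\int_{\D^{n-1}}\Bigl[\sup_{\zeta_1\in\D}\phi-\tfrac{1}{\pi}\int_\D\phi\,dA(\zeta_1)\Bigr]dA$$
combined with the inductive hypothesis and the 1D estimate gives $UO_{\D^n}(\phi)<3n\le 3^n$.

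The main obstacle is part (1): one must verify that the combination of Harnack's inequality and convex analysis really does absorb the potentially $-\infty$ pointwise values of $\phi$ and produces a constant that is uniform over all polydiscs in $\mathcal{P}_N|_{\Omega_0}$, not degenerating as polyradii shrink or eccentricities approach the finite-type boundary. The subtlety of part (3) lies in verifying that the slice envelope $\psi$ remains in the Lelong class on $\C^{n-1}$ with constants controlled well enough for the induction to close.
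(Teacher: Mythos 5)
Your proposal for part~(3) is a genuinely different and interesting route, but parts~(1) and~(2) have real gaps; let me address each.

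\textbf{Part (1).} The Harnack step as written does not hold. With $\phi\le 0$ on $\Omega_1$, the auxiliary $u(w):=\phi_{P(w,r)}$ is nonpositive psh, and you claim $\sup_{w\in P}u(w)\le C_{N,n}\,u(z_0)$. But for a nonpositive function one always has $\sup_P u\ge u(z_0)$, and the proposed inequality with $C_{N,n}\ge 1$ would force $\sup_P u=u(z_0)$ whenever $u(z_0)<0$; with $C_{N,n}<1$ it gives only $\sup_P u-u(z_0)\le (1-C_{N,n})|u(z_0)|$, which blows up as $\phi_P\to -\infty$. More to the point, Harnack's inequality applies to nonnegative \emph{harmonic} functions; a nonpositive psh $u$ is dominated by a nonpositive harmonic majorant $h$, and Harnack on $-h$ only yields $u(w)\le h(w)\le \tfrac{1}{C}h(z_0)$, but $h(z_0)\ge u(z_0)$ points the wrong way, so you cannot reach $u(z_0)$. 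The paper circumvents exactly this by splitting $UO_P(\phi)=I_1+I_2$, where $I_1=\sup_P\phi-\phi_{\partial P}$ (Shilov boundary mean) and $I_2=\phi_{\partial P}-\phi_P$. Harnack is applied not to $\phi$ itself but to the normalized nonpositive subharmonic function $\psi=\phi-\sup_P\phi$, giving the \emph{additive} bound $I_1\le 3^n(\sup_P\phi-\sup_{\frac12 P}\phi)$ via a Poisson-kernel lower bound; $I_2$ is then handled by the barycenter Lemma~\ref{lm:convexity}. Both $J_1=\sup_P\phi-\sup_{\frac12 P}\phi$ and $J_2=f(0)-f(-1/2,\dots,-1/2)$ are differences of convex increasing functions in the log-polyradii, and the finite-type condition enters through Lemma~\ref{le:convex2} (the cone $A_N$), which keeps these differences uniformly bounded. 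Your strategy needs to be restructured along these lines.

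\textbf{Part (2).} Your candidate $\phi=\log|f|$ with $f(z_1,z_2)=z_1-\sum c_k z_2^{m_k}$ is only a heuristic ("one expects"), and I doubt it succeeds: $\log|f|$ is pluriharmonic off the zero divisor, and at any given point the curve $\{f=0\}$ has a \emph{fixed} finite order of tangential contact with the coordinate hyperplanes, so it is not clear how to extract a diverging sequence of polydisc oscillations from a single such $f$. The paper's counterexample $\phi(z,w)=-\sqrt{(\log|z|+\log|w|)\log|w|}$ is of an entirely different nature — genuinely psh, not of the form $\log|\text{holomorphic}|$ — and is engineered so that the associated convex function $f(x,y)=-\sqrt{(x+y)y}$ directly violates the conclusion of Lemma~\ref{le:convex2} (Lemma~\ref{le:2.5}), which makes the divergence of $UO$ (and then $MO$) transparent.

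\textbf{Part (3).} Here the telescoping identity and the dimensional induction are correct and genuinely different from the paper, which instead reuses the $I_1+I_2$ decomposition together with Lemma~\ref{le:convex3}. The sublinear-growth bound indeed passes to the envelope $\psi(\zeta')=\sup_{\zeta_1\in\D}\phi(\zeta_1,\zeta')$ and to the $\zeta_1$-slices, and if one has a uniform $1$-dimensional bound $UO_\D(\phi)\le C_1<3$ for all subharmonic $\phi$ in the Lelong class, the induction closes to give $UO_{\D^n}(\phi)\le n\,C_1<3n\le 3^n$, which is actually \emph{sharper} than the paper's $3^n$ for $n\ge 2$. However, the base case as you state it is incomplete: "convexity of circular means" alone controls only $I_2=\phi_{\partial D}-\phi_D$; to bound $I_1=\sup_D\phi-\phi_{\partial D}$ one still needs Harnack (the Poisson-kernel constant $1/3$), exactly as in the paper. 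Plugging in the paper's 1D value $C_1=3\log 2+1/2\approx 2.58$ completes your argument and yields the improved constant.
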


For  $({\rm deg\,}p)^{-1}\log |p|\in \mathcal L$ where $p$ is a complex polynomial, we even obtain a dimension-free BUO estimate with respect to all compact convex sets.

\begin{theorem}\label{th:remez1} For every non-empty compact convex set $A$ in $\mathbb C^n$, we have
$$
UO_{A} (\log |p|)  \leq \gamma \cdot \deg p,
$$
for all $p\in\mathbb C[z_1, \cdots, z_n]$. Here the constant $\gamma\in (1,2)$ is determined by
$$
\gamma+\log(\gamma-1)=0.
$$
\end{theorem}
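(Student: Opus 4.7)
The approach I would take combines the layer-cake formula with a sharp Remez-type inequality for polynomials on complex convex bodies, followed by a reduction to one complex variable via slicing.

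After normalizing so that $\sup_A|p|=1$, the integrand $\log(1/|p|)$ is nonnegative on $A$, and the layer-cake formula gives
\[
UO_A(\log|p|)=\frac{1}{|A|}\int_A\log(1/|p|)\,dV=\int_0^\infty \mu(t)\,dt,\qquad \mu(t):=\frac{|\{z\in A:|p(z)|<e^{-t}\}|}{|A|}.
\]
Combining the trivial bound $\mu\le 1$ with a Remez-type decay $\mu(t)\le K\,e^{-t/d}$ (where $d:=\deg p$ and $K>1$) and splitting the integral at the crossover point $t_0:=d\log K$ yields
\[
\int_0^\infty \mu(t)\,dt\;\le\;t_0+\int_{t_0}^\infty K\,e^{-t/d}\,dt\;=\;d(\log K+1).
\]
Setting this equal to $\gamma d$ forces $K=e^{\gamma-1}$, and the defining equation $\gamma+\log(\gamma-1)=0$ (equivalently $\gamma-1=e^{-\gamma}$) is precisely the compatibility condition for this choice of constant.

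The core analytic task is then to prove the Remez-type inequality
\[
\frac{|E|}{|A|}\;\le\;e^{\gamma-1}\Bigl(\frac{\sup_E|p|}{\sup_A|p|}\Bigr)^{1/d}
\]
for every measurable $E\subset A$. In $\C^n$ the plan is to reduce this to a one-variable statement by slicing $A$ with complex lines: since $A$ is convex, every complex-line section $A\cap L$ is convex in $L\cong\C$, and $p|_L$ is a one-variable polynomial of degree at most $d$. An integral-geometric averaging over the family of complex lines should then promote the one-variable inequality to $n$ variables without inflating the constant, the convexity of slices being essential for dimension-freeness.

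In one complex variable, factoring $p(z)=c\prod_{j=1}^d(z-\alpha_j)$ and using subadditivity $UO_A(\log|p|)\le\sum_j UO_A(\log|z-\alpha_j|)$ reduces the full estimate to the single-linear-factor case: showing $UO_A(\log|z-\alpha|)\le\gamma$ for every compact convex $A\subset\C$ and every $\alpha\in\C$. This is a convex-geometric extremal problem in the plane, whose sharp value should be the constant $\gamma$ characterized by the transcendental equation.

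The main obstacle is the one-variable convex-geometric extremal problem that identifies the sharp constant $\gamma$; the equation $\gamma+\log(\gamma-1)=0$ should emerge as the first-order condition of this extremal problem, and verifying sharpness requires analyzing the extremizing (possibly degenerate) convex configurations in the plane. A secondary difficulty is ensuring that the multidimensional reduction is genuinely dimension-free, which requires an integral-geometric step that exploits convexity of $A$ to avoid losses of the form $c_n$ growing with $n$.
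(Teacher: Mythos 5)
Your high-level intuition — reduce to one complex variable by slicing, factor the polynomial, use subadditivity of $UO$, and solve a planar extremal problem to pin down $\gamma$ — matches the skeleton of the paper's argument. But the specific route you propose has a genuine flaw at the outset and a logical short-circuit in the middle, and the averaging step (which is where dimension-freeness must be earned) is left unaddressed.

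\textbf{The Remez step is false as stated.} You posit $\mu(t)\le Ke^{-t/d}$ with $K=e^{\gamma-1}\approx 1.32$. Already for $d=1$, $p(z)=z$, and $A$ a short segment $[-c,1]\subset\C$ with $c\to 0^+$, the sublevel set $\{|z|<\varepsilon\}\cap A$ has relative measure $\to 2\varepsilon$, so the sharp Remez constant for a single linear factor on segments is $2$, not $e^{\gamma-1}<2$. More fundamentally, even the correct Remez bound cannot recover the sharp BUO constant via layer-cake: a pointwise bound $\mu(t)\le\min(1,Ke^{-t/d})$ is only tight at one crossover value of $t$, so $\int_0^\infty\mu$ is strictly overestimated. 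With the true $K=2$ one gets $d(1+\log 2)\approx 1.693d>\gamma d$. The equation $\gamma+\log(\gamma-1)=0$ is \emph{not} a compatibility condition for such a Remez exponent; it arises only from a direct extremal computation of the mean of $\log|z|$ over a line segment, which is unrelated to the layer-cake bookkeeping.

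\textbf{The reduction short-circuits itself.} After declaring the Remez inequality to be the ``core analytic task,'' you immediately factor $p$ and invoke $UO_A(\log|p|)\le\sum_j n_j\,UO_A(\log|z-\alpha_j|)$. This subadditivity is valid for $UO$ (because the sup is subadditive and the mean is linear in $\log|p|$), but it is \emph{not} a reduction of a sublevel-set measure estimate to single factors — Remez-type inequalities do not factor this way. So the proposal silently abandons the layer-cake/Remez route and comes back to proving the BUO estimate directly, which is exactly what the paper does without the detour.

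\textbf{The slicing is not sharp enough, and the averaging step is the actual difficulty.} You reduce to $UO_A(\log|z-\alpha|)\le\gamma$ for arbitrary compact convex $A\subset\C$, treating this as the end of the reduction. But that is still a two-real-dimensional extremal problem and doesn't obviously produce the equation for $\gamma$. The paper's decisive move is to reduce one step further, to \emph{line segments}: $\gamma$ is defined as $\sup_{a\ne b}UO_{[a,b]}(\log|z|)$, which is computable by elementary calculus (after a rotation one may take $a,b\in\R$, and one finds the extremizer $a_0\in(-1,0)$ satisfying $1-a_0+\log(-a_0)=0$, equivalently $\gamma+\log(\gamma-1)=0$). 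Translation invariance then gives $UO_{[a,b]}(\log|z-\alpha|)\le\gamma$, and factoring gives $UO_{[a,b]}(\log|p|)\le\gamma\deg p$ for every line segment. The passage to a general convex $A\subset\C^n$ is then done by choosing $z_0\in A$ a maximizer of $|p|$ and foliating $A$ by the rays $L$ emanating from $z_0$; since $z_0\in A\cap L$, one has $\sup_{A\cap L}\log|p|=\sup_A\log|p|$ for every ray, so each slice's upper oscillation is taken against the same supremum, and one averages the slice inequalities. You allude to ``integral-geometric averaging'' but leave it entirely implicit; it is precisely here, and in the observation that the slicing must pass through the maximizer, that the dimension-free constant is obtained.

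In short: the correct ingredients you do identify are the 1-D factoring/subadditivity and the fact that $\gamma$ comes from a planar extremal problem. What is missing or wrong is (i) the Remez constant $e^{\gamma-1}$, which is too small to be true; (ii) the belief that a Remez inequality plus layer-cake can yield the \emph{sharp} constant; (iii) the reduction all the way to line segments; and (iv) the specific slicing-through-the-maximizer argument that lets one average.
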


\textbf{Remark}: (i) The above estimate is sharp, in fact, there exists a line segment $A$ in $\mathbb C$ such that
$$UO_A(\log|z|)=\gamma.$$

(ii) In particular, if $A$ is a compact convex set in $\mathbb R^n \subset \mathbb C^n$ and all coefficients of $p$ are real, then we have
$$
UO_A (\log |p|) \leq  \gamma \cdot \deg p <2\deg p,
$$
 which is closely related the classical Remez inequality for real polynomials.  Theorem \ref{th:remez1} also suggests  to study the Remez inequality for \emph{complex} polynomials (see \cite{BG} and \cite{BJZ} for related results).

 (iii) Notice that $
1.278<\gamma<1.279.
$
By $(\ref{eq:3})$ we have
 $$
 MO_A (\log |p|) \leq  2\gamma \cdot \deg p< 2.558\cdot \deg p.
 $$
 Such dimension-free estimate (with a slightly better constant $2+\log 2\approx 2.301$) was first obtained by Nazarov et al. \cite{Nazarov}. Our proof of  Theorem \ref{th:remez1} is  elementary, however.

\medskip

For $\phi\in PSH(\Omega)$ we define the (weighted) Bergman kernel by
$$
K_{\phi,\Omega}(z)=\sup\left\{|f(z)|^2: f\in {\mathcal O}(\Omega),\  \int_\Omega |f|^2 e^{-\phi}\le 1\right\}.
$$
For a vector $a=(a_1,\cdots,a_n)$ with all $a_j>0$ we set
$$
P_{r^a}:=\{z\in \mathbb C^n: |z_j|\leq r^{a_j},\,1\le j\le n\}.
$$
It was shown in \cite{Chen} that if $\phi$ is psh on the closure of the unit ball  $\mathbb B^n$ and $a_0=(1,1/2,\cdots,1/2)$ then
$$
 \lim_{r\rightarrow 0+} \frac{\log K_{\varepsilon\phi,\mathbb B^n}(1-r,0,\cdots,0)}{\log 1/r}=n+1-\varepsilon \cdot \lim_{r\rightarrow 0+} \frac{\sup_{z\in P_{r^{a_0}}} \phi (1+z) }{\log r}
 $$
provided $\varepsilon\ll 1$, where $1+z=(1+z_1,z_2,\cdots,z_n)$. The limit in RHS of the above inequality is called the $a_0-$directional Lelong number  of $\phi$ at $(1,0,\cdots,0)$ (see \cite{K94}).

Here we will present an analogous but independent result, as an application of Theorem \ref{th:Main1}.
 For $\phi\in PSH({\mathbb D}^n)$ and $t\in \mathbb D^n$ we define
$$
\phi^t(z):=\phi(tz), \ \ tz:=(t_1z_1, \cdots, t_n z_n).
$$
A fundamental result of Berndtsson \cite{Bern06} implies that
$$
F(\phi): \, (t,z) \mapsto \log K_{\phi^t, \, \mathbb D^n}(z)
$$
is psh on $\mathbb D^n \times \mathbb D^n$.

\begin{theorem}\label{th:Main2}
For each $a=(a_1,\cdots,a_n)$ with all $a_j>0$, there exists a number $\varepsilon_0=\varepsilon_0(a,\phi,\Omega)$ such that
\begin{equation}\label{eq:5}
\lim_{r\to 0+} \frac{\sup_{t\in P_{r^a}} F(\varepsilon \phi)(t,0)/\varepsilon }{\log r}=\lim_{r\to 0+} \frac{\sup_{z\in P_{r^a}} \phi(z) }{\log r}
\end{equation}
holds for all $\varepsilon\le \varepsilon_0$.
\end{theorem}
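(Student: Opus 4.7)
The plan is to show that $u(t):=F(\varepsilon\phi)(t,0)=\log K_{\varepsilon\phi^t,\mathbb D^n}(0)$ satisfies the two-sided estimate
$$-C\le u(t)-\varepsilon\sup_{\{|w_j|\le |t_j|\}}\phi(w)\le C$$
uniformly on a compact neighborhood of $0$, once $\varepsilon$ is small. Taking $\sup$ over $t\in P_{r^a}$ and dividing by $\log r$ will then immediately yield $(\ref{eq:5})$, because both sides already have limits: the RHS by Kiselman's convexity argument applied to $\phi\in PSH(\mathbb D^n)$, and the LHS by the same argument applied to $t\mapsto u(t)$, which is psh on $\mathbb D^n$ by Berndtsson's theorem.

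For the upper bound, the sub-mean-value inequality on $\mathbb D^n$ combined with $|f|^2\le e^{\varepsilon\sup_{\mathbb D^n}\phi^t}\cdot|f|^2 e^{-\varepsilon\phi^t}$ gives, for any $f\in\mathcal O(\mathbb D^n)$ with $\int |f|^2 e^{-\varepsilon\phi^t}\,dV\le 1$,
$$|f(0)|^2\le \frac{1}{|\mathbb D^n|}\exp\bigl(\varepsilon\sup_{\mathbb D^n}\phi^t\bigr),$$
so $u(t)\le -\log|\mathbb D^n|+\varepsilon\sup_{|w_j|\le |t_j|}\phi(w)\le -\log|\mathbb D^n|+\varepsilon\sup_{P_{r^a}}\phi$ for $t\in P_{r^a}$. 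Dividing by $\varepsilon\log r$ (negative) and letting $r\to 0^+$ produces $\liminf\ge \nu$, where $\nu:=\lim_{r\to 0^+}(\log r)^{-1}\sup_{P_{r^a}}\phi$. This half needs no smallness of $\varepsilon$.

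For the lower bound, testing $K_{\varepsilon\phi^t,\mathbb D^n}(0)$ against the constant $f\equiv c$ yields $K_{\varepsilon\phi^t,\mathbb D^n}(0)\ge\bigl(\int_{\mathbb D^n} e^{-\varepsilon\phi(tz)}\,dV\bigr)^{-1}$, and for $t$ with all $t_j\ne 0$ the change of variables $w=tz$ turns this into
$$u(t)\ge -\log|\mathbb D^n|-\log\Bigl(\frac1{|D_t|}\int_{D_t} e^{-\varepsilon\phi}\,dV\Bigr),\qquad D_t:=\{|w_j|<|t_j|\}.$$
Specializing to $t=(r^{a_1},\ldots,r^{a_n})\in\partial P_{r^a}$ makes $D_t=P_{r^a}$, so what remains is to secure a constant $C$, independent of small $r$, with
$$\frac{1}{|P_{r^a}|}\int_{P_{r^a}} e^{-\varepsilon(\phi-\sup_{P_{r^a}}\phi)}\,dV\le C.$$
Here Theorem \ref{th:Main1}(1) enters decisively: setting $N:=\max_j a_j/\min_j a_j$, every $P_{r^a}$ lies in $\mathcal P_N$ and eventually sits inside, say, $(1/2)\mathbb D^n\subset\subset\mathbb D^n$, so the BUO bound $UO_{P_{r^a}}(\phi)\le C_0$ is uniform in $r$. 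Together with $(\ref{eq:3})$ and the inequality $\sup_P\phi-\phi\le|\phi-\phi_P|+UO_P(\phi)$, the John--Nirenberg exponential integrability for $BMO$ functions on $\mathcal P_N$ provides such a $C$ whenever $\varepsilon\le\varepsilon_0(C_0)$, and this is what fixes $\varepsilon_0=\varepsilon_0(a,\phi,\mathbb D^n)$. The resulting $\sup_{t\in P_{r^a}}u(t)\ge -C'+\varepsilon\sup_{P_{r^a}}\phi$ then gives the matching $\limsup\le \nu$.

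The main obstacle is this uniform exponential integrability step: the entire argument would collapse if $UO_{P_{r^a}}(\phi)$ blew up as $r\to 0^+$, and avoiding that blow-up is precisely what restricting to the finite-type class $\mathcal P_N$ via Theorem \ref{th:Main1}(1) accomplishes. Everything else---the sub-mean-value bound, the constant test function, the change of variables, and the John--Nirenberg inequality on $\mathcal P_N$---is standard, so once the uniform BUO bound is harvested the two one-sided estimates combine to give $(\ref{eq:5})$.
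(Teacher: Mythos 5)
Your proposal is correct and mirrors the paper's proof: the two-sided pointwise estimate you derive for $F(\varepsilon\phi)(t,0)$ is exactly the combination of Lemma \ref{lm:BergmanEstimate} with the scaling identity \eqref{eq:2.5}, and the uniform exponential integrability you invoke is Proposition \ref{prop:J-N}, which the paper obtains from Theorem \ref{th:Main1}(1). The one step you leave as a citation --- that the John--Nirenberg inequality applies to the anisotropic family $\{P_{r^a}\}$ rather than just to cubes or balls --- is worked out in the paper's appendix via a Calder\'on--Zygmund decomposition for the quasi-metric $\rho(z,w)=\max_k|z_k-w_k|^{1/a_k}$.
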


Although Theorem \ref{th:Main2} makes sense only when $\phi$ is singular at the origin,  it is of independent interest  to study the relation between $F(\phi)$ and $\phi$ for smooth $\phi$.

\begin{theorem}\label{th:Main3}
Let $\phi$ be a smooth psh function on $\mathbb D^n$. Then
\begin{equation}\label{eq:6}
\lim_{t\to 0} \frac{\partial^2 F(\phi)}{\partial t_j \partial\bar t_k} (t,0)=
\begin{cases}
\frac12 \cdot  \frac{\partial^2 \phi}{\partial z_j \partial\bar z_j} (0), \
\text{if}\ \, j=k; \\
0, \
\text{if}\ \, j\neq k.
\end{cases}
\end{equation}
In particular $F(\phi)(t,0)$ is strictly psh at $t=0$ if $\phi$ is strictly psh at $z=0$.
\end{theorem}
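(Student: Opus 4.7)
My plan is to compute the full second-order Taylor expansion of $t \mapsto F(\phi)(t,0) = \log K_{\phi^t, \mathbb D^n}(0)$ at $t=0$. Since $\phi$ is smooth, the weight $\phi^t$ varies smoothly in $t$, and a standard Hilbert-space argument shows that $t\mapsto F(\phi)(t,0)$ is $C^\infty$ near $0$; hence the limit in (\ref{eq:6}) coincides with the mixed Hessian at $t=0$, which I would compute perturbatively.

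First I would Taylor-expand
\[
\phi(tw) = \phi(0) + 2\,\mathrm{Re}\, H_t(w) + G_t(w,\bar w),
\]
splitting the terms into those holomorphic in $w$ (collected in $2\,\mathrm{Re}\,H_t$, where $H_t$ is a polynomial in $w$ of degree $\ge 1$ with $H_t(0)=0$) and the genuinely mixed ones (collected in $G_t$, containing only monomials $t^\alpha \bar t^\beta w^\alpha \bar w^\beta$ with $|\alpha|,|\beta|\ge 1$), whose leading piece is
\[
\sum_{j,k=1}^n \phi_{z_j\bar z_k}(0)\, t_j\bar t_k\, w_j\bar w_k + O(|t|^3).
\]
Because $e^{-2\,\mathrm{Re}\,H_t} = |e^{-H_t}|^2$ and the substitution $f = e^{H_t}\tilde f$ is a bijection on $\{f\in\mathcal{O}(\mathbb D^n):f(0)=1\}$ (since $H_t(0)=0$), the extremal characterization of the Bergman kernel reduces to
\[
\frac{1}{K_{\phi^t,\mathbb D^n}(0)} = e^{-\phi(0)}\inf_{\tilde f\in\mathcal{O}(\mathbb D^n),\,\tilde f(0)=1}\int_{\mathbb D^n} |\tilde f|^2\, e^{-G_t}\,dV.
\]

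Next I write $\tilde f = 1 + s$ with $s\in\mathcal{O}(\mathbb D^n)$, $s(0)=0$, expand $e^{-G_t} = 1 - G_t + O(|t|^4)$, and use $\int s\,dV = 0$ to get
\[
\int|\tilde f|^2 e^{-G_t}\,dV = \pi^n + \|s\|^2 - \int_{\mathbb D^n} G_t\,dV - 2\,\mathrm{Re}\int_{\mathbb D^n} s\, G_t\,dV + O\bigl(|t|^4 + \|s\|^2\,|t|^2\bigr).
\]
The key polydisc orthogonality --- namely $\int_{\mathbb D^n} s(w)\, w_j\bar w_k\,dV = 0$ for every holomorphic $s$ with $s(0)=0$, because a monomial $w^\gamma$ with $\gamma\ge 0$, $\gamma\ne 0$ pairs non-trivially with $w_j\bar w_k$ only if $\gamma+e_j=e_k$, which has no solution --- shows that the leading $|t|^2$ part of $G_t$ decouples from $s$, so $\int s\,G_t\,dV = O(\|s\|\cdot|t|^3)$ and minimizing out $s$ contributes only $O(|t|^6)$. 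Parity of integration on the Reinhardt domain $\mathbb D^n$ then yields
\[
\int_{\mathbb D^n} G_t\,dV = \frac{\pi^n}{2}\sum_{j=1}^n \phi_{z_j\bar z_j}(0)\,|t_j|^2 + O(|t|^4),
\]
and taking logarithms produces
\[
F(\phi)(t,0) = \phi(0) - n\log\pi + \frac12\sum_{j=1}^n \phi_{z_j\bar z_j}(0)\,|t_j|^2 + O(|t|^4),
\]
from which (\ref{eq:6}) follows by differentiation at $t=0$; the final strict-psh assertion is immediate since positive-definiteness of $(\phi_{z_j\bar z_k}(0))$ forces each diagonal $\phi_{z_j\bar z_j}(0)>0$. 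The delicate point I anticipate is verifying that the $s$-perturbation is genuinely harmless at order $|t|^2$: the polydisc monomial orthogonality above is what achieves this, and is the same mechanism producing the vanishing of the off-diagonal Hessian in (\ref{eq:6}); the clean diagonal form of the answer reflects the Reinhardt symmetry of $\mathbb D^n$.
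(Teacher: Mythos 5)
Your proof is correct and follows a genuinely different route from the paper. The paper invokes Berndtsson's variational formula for the Bergman kernel (Proposition 2.2 of \cite{Bern17}), namely $\partial_{t_j} K_{\phi^t,\mathbb D^n}(0)=\int_{\mathbb D^n}(\partial_{t_j}\phi^t)\,|K_{\phi^t,\mathbb D^n}(\cdot,0)|^2 e^{-\phi^t}$; it then uses the reproducing property of $K_{\phi^t,\mathbb D^n}(\cdot,0)$ to subtract off the value of $\partial_{t_j}\phi^t$ at $t=0$ (which is a holomorphic multiple of $z_j$), and differentiates once more at $t=0$ to land directly on $\int_{\mathbb D^n}z_j\bar z_k\,\partial^2_{t_j\bar t_k}\phi^t|_{t=0}\,|K_{\phi(0)}(\cdot,0)|^2 e^{-\phi(0)}$. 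Your substitution $f=e^{H_t}\tilde f$, which absorbs the pluriharmonic part of $\phi^t$, is the variational counterpart of the paper's use of the reproducing property to kill the linear term, and the final Reinhardt orthogonality $\int_{\mathbb D^n}z_j\bar z_k\,dV=\delta_{jk}\pi^n/2$ is the same in both arguments. What your route buys is self-containedness (no appeal to Berndtsson's formula) and a full second-order expansion of $F(\phi)(t,0)$; the cost is heavier bookkeeping of error terms. Two points worth tightening, though the paper is equally casual about them: (i) passing from the Taylor expansion of $F(\phi)(\cdot,0)$ to the \emph{limit} in \eqref{eq:6} uses that $t\mapsto F(\phi)(t,0)$ is $C^2$ near $t=0$, which you assert but do not prove (the paper implicitly assumes the analogous smoothness needed to differentiate twice); and (ii) since $\phi$ is merely $C^\infty$ rather than real-analytic, $H_t$ should be defined from a fixed finite-order Taylor polynomial of $\phi$ at $0$, with the Taylor remainder $R(tw)=O(|t|^{k+1})$ absorbed into $G_t$ — so $G_t$ is not literally a sum of monomials $t^\alpha\bar t^\beta w^\alpha\bar w^\beta$ with $|\alpha|,|\beta|\ge1$, but the leading $O(|t|^2)$ and $O(|t|^3)$ parts of $G_t$ are, and that is all the orthogonality argument needs.
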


\textbf{Remark}:  Since $F(\phi)(t,0)$ depends only $(|t_1|,\cdots, |t_n|)$, it follows from the psh property of $F(\phi)$ that
\begin{equation}\label{eq:bo}
\log\frac{e^{\phi(0)}}{\pi^n}=F(\phi)(0,0)\leq  F(\phi)(t,0)=\log K_{\phi^t, \,\mathbb D^n}(0).
\end{equation}
Letting $t$ tend to $(1,\cdots, 1)$, we obtain the sharp Ohsawa--Takegoshi estimate (cf. \cite{Blocki}; see also \cite{GZ,BL}):
\begin{equation}\label{eq:7}
K_{\phi,\, \mathbb D^n}(0) \geq \frac{e^{\phi(0)}}{\pi^n}.
\end{equation}
Theorem \ref{th:Main3} suggests that  one should have a better lower bound  for $K_{\phi, \,\mathbb D^n}$  in case $\phi$ is \/ {\it strictly}\/ psh.

\section{An enlightening  example}
To explain why BUO is easier than BMO, we will show that the  upper oscillation  of $\log|z|$ with respect to discs is computable. Recall that
$$
UO_{B} (\log |z|):={\sup}_B \log|z|-(\log|z|)_B
$$
for every disc $B$ in $\mathbb C$.

\begin{lemma}\label{lm:integral}
Fix $\hat{z}\in \mathbb C$ and set
$$
I(c):=\frac1{2\pi} \int_{0}^{2\pi} \log|\hat z+c e^{i\theta}| \, d\theta, \ \ c>0.
$$
Then we have
$$
I(c)=
\begin{cases}
\begin{array}{ll}
 \log|\hat z| &  \text{if}\ \  c\leq |\hat z|  \\
\log c &  \text{if}\ \  c> |\hat z|.
\end{array}
\end{cases}
$$
\end{lemma}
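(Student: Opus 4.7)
My plan is to recognize $I(c)$ as the circular mean of the subharmonic function $w \mapsto \log|w|$ over the circle of radius $c$ centered at $\hat{z}$, and to split into the two cases according to whether $0$ lies outside or inside this circle. The main tool is the mean value property of harmonic functions, applied after a convenient reparametrization.

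In the first case $c \leq |\hat{z}|$, I would observe that the function $w \mapsto \log|\hat{z}+cw|$ is harmonic on a neighborhood of the closed unit disc, because its only zero is at $w=-\hat{z}/c$ with $|\hat{z}/c| \geq 1$. (When $c=|\hat{z}|$ the zero sits on the boundary circle, but the logarithmic singularity is integrable and one can argue by continuity or by approximating $c$ from below.) The mean value property then gives
\[
I(c) = \frac{1}{2\pi} \int_0^{2\pi} \log|\hat{z}+ce^{i\theta}|\,d\theta = \log|\hat{z}+c\cdot 0| = \log|\hat{z}|.
\]

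In the second case $c > |\hat{z}|$, I would factor out $ce^{i\theta}$ inside the logarithm:
\[
\log|\hat{z}+ce^{i\theta}| = \log c + \log\bigl|1 + (\hat{z}/c)e^{-i\theta}\bigr|,
\]
since $|ce^{i\theta}|=c$. Now the function $w \mapsto \log|1+(\hat{z}/c)w|$ is harmonic on a neighborhood of the closed unit disc, because $|\hat{z}/c|<1$ forces its zero $w=-c/\hat{z}$ to lie strictly outside. Another application of the mean value property yields
\[
\frac{1}{2\pi}\int_0^{2\pi} \log\bigl|1+(\hat{z}/c)e^{-i\theta}\bigr|\,d\theta = \log|1+0| = 0,
\]
and hence $I(c)=\log c$.

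There is no serious obstacle here; the only delicate point is the boundary case $c=|\hat{z}|$, where the integrand has a logarithmic singularity on the circle. This is handled by dominated convergence, letting $c$ approach $|\hat{z}|$ from either side, or by noting that the two formulas agree at $c=|\hat{z}|$ so the result extends continuously.
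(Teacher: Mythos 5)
Your proposal is correct and follows essentially the same route as the paper: both arguments reduce each case to the mean value property of a harmonic function after a suitable reparametrization (the paper rewrites $I(c)=\frac1{2\pi}\int_0^{2\pi}\log|\hat z e^{i\theta}+c|\,d\theta$ in the second case, while you factor out $c$; these are interchangeable). The treatment of the borderline $c=|\hat z|$ by continuity matches the paper's implicit handling as well.
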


\begin{proof}
If $c\leq |\hat z|$ then $\log|z|$ is harmonic in the disc $\{z:|z-\hat z| < c\}$,  so that $I(c)=\log|\hat z|$, in view of the mean-value equality. For $c>|\hat z|$  we may write
$$
I(c)=\frac1{2\pi}\int_{0}^{2\pi} \log|\hat z e^{i\theta}+c|\, d\theta.
$$
As $\log|z|$ is harmonic in $\{z:|z-c|< |\hat z|\}$, we get $I(c)=\log c$.
\end{proof}

\begin{proposition}
For any disc $B$ we have
$$UO_{B} (\log |z|) \leq \log\frac{\sqrt 5+1}{2} +\frac{\sqrt 5-1}{4}.$$
Moreover, the bound is sharp.
\end{proposition}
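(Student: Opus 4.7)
The plan is to reduce the problem to a one-variable optimization, using Lemma \ref{lm:integral} to compute the average of $\log|z|$ on a disc, and then separately analyzing the cases $0\notin B$ and $0\in B$.

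First, write $B=B(\hat z,c)$. The supremum of $|z|$ on $B$ is attained on the boundary, so $\sup_B\log|z|=\log(|\hat z|+c)$. For the average, pass to polar coordinates centered at $\hat z$ and apply Lemma \ref{lm:integral}:
$$\frac1{|B|}\int_B \log|z|=\frac{2}{c^2}\int_0^c \rho\, I(\rho)\,d\rho.$$
When $c\le|\hat z|$ (so $0\notin B$), $I(\rho)=\log|\hat z|$ throughout and the average equals $\log|\hat z|$. When $c>|\hat z|$, I would split the integral at $\rho=|\hat z|$ and use the elementary primitive of $\rho\log\rho$; the result is $\log c-\tfrac12+\tfrac{|\hat z|^2}{2c^2}$.

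Combining these with $\sup_B\log|z|=\log(|\hat z|+c)$ yields two explicit formulas:
$$UO_B(\log|z|)=\begin{cases} \log(1+t), & t:=c/|\hat z|\in(0,1],\\ \log(1+s)+\tfrac{1-s^2}{2}, & s:=|\hat z|/c\in[0,1). \end{cases}$$
The first case is bounded by $\log 2$. For the second I would differentiate in $s$: the critical equation $\tfrac1{1+s}=s$ gives $s^2+s-1=0$, hence the golden-ratio value $s_\ast=\tfrac{\sqrt5-1}{2}$, which is indeed a maximum by the obvious concavity check. Using $s_\ast^2=1-s_\ast$, so that $\tfrac{1-s_\ast^2}{2}=\tfrac{s_\ast}{2}=\tfrac{\sqrt5-1}{4}$ and $1+s_\ast=\tfrac{\sqrt5+1}{2}$, the maximum evaluates to the stated bound $\log\tfrac{\sqrt5+1}{2}+\tfrac{\sqrt5-1}{4}$, which also comfortably exceeds $\log 2$.

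Sharpness is built into the argument: any disc with $|\hat z|/c=(\sqrt5-1)/2$ achieves equality. There is essentially no hidden obstacle here, since Lemma \ref{lm:integral} reduces everything to a single-variable optimization on $[0,1)$; the only place where care is needed is the polar integration and the case split at $\rho=|\hat z|$, after which the golden-ratio extremum falls out cleanly.
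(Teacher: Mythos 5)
Your proof is correct and follows essentially the same route as the paper's: reduce to the one-variable function $f(s)=\log(1+s)+\tfrac12(1-s^2)$ via Lemma \ref{lm:integral} and polar integration, then locate the maximum at the golden-ratio value $s_*=\tfrac{\sqrt5-1}{2}$ (the paper writes $x=|\hat z|/b$ where you write $s=|\hat z|/c$, but the computation and the case split are identical).
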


\begin{proof}
Suppose $B=\{z:|z-\hat z|<b\}$. By Lemma \ref{lm:integral} we have
$$
(\log|z|)_{B}= \log|\hat z|, \ \ \ \ \text{if}\ \ b\leq |\hat z|,
$$
and if $b>|\hat z| $ then
\begin{eqnarray*}
(\log|z|)_{B} & = & \frac{1}{\pi b^2}\int_{0}^b 2\pi c \cdot I(c)\, dc\\
& = & \log b-\frac{1}{2} \left(1-\frac{|\hat z|^2}{b^2}\right).
\end{eqnarray*}
It follows that
$$
UO_{B} (\log |z|) =
\begin{cases}
\begin{array}{ll}
\log(b+|\hat z|)-\log |\hat z| &  \text{if} \ \ b\leq |\hat z|\\
\log(b+|\hat z|)-\log b+\frac{1}{2} \left(1-\frac{|\hat z|^2}{b^2}\right) &  \text{if}\ \ b>|\hat z| .
\end{array}
\end{cases}
$$
If $b\leq |\hat z|$ then
$$
UO_{B} (\log |z|) =\log\left(\frac b{|\hat z|}+1\right)\leq \log 2.
$$
For $b> |\hat z|$ we set $x=|\hat z|/b$ and write $UO_{B} (\log |z|)$ as
$$
 f(x)=\log(1+x)+\frac12 \cdot (1-x^2), \ \ 0<x<1.
$$
Since
$$
f'(x)=\frac1{1+x}-x,
$$
we see that $f$ is increasing on $[0, \hat x]$ and decreasing on $[\hat x, 1]$, where $
\hat x=\frac{\sqrt 5-1}{2}.
$
Notice that
$$
f(\hat x)=\log\frac{\sqrt 5+1}{2} +\frac{\sqrt 5-1}{4}.
$$
Thus $$UO_{B} (\log |z|) \leq \log\frac{\sqrt 5+1}{2} +\frac{\sqrt 5-1}{4}$$
and the equality holds if and only if
$$
\frac{|\hat z|}{b}=\frac{\sqrt 5-1}{2}.
$$
This finishes the proof.
\end{proof}

\section{Proof of Theorem \ref{th:Main1}}

\subsection{One dimensional case}
Let $\Omega$ be a domain in $\mathbb C$ and $\phi$  a subharmonic function on $\Omega$. Recall that
$$
UO_B(\phi)={\sup}_B\, \phi-\phi_B
$$
where
$
B=\{z:|z-\hat z| <r\}\subset \Omega.
$
The idea is to use \emph{Harnack's inequality} and a convexity lemma. Let us write
$$
UO_B(\phi)=I_1+I_2,
$$
where
$$
I_1={\sup}_{B} \,\phi-\phi_{\partial B}, \ \ I_2:=\phi_{\partial B}-\phi_{B},
$$
with $\phi_{\partial B}$ being the mean-value of $\phi$  over the boundary  $\partial B$. For each $\tau>0$ we  set
$$
 \tau B=\{z:|z-\hat z|<\tau r\}.
$$
 Applying Harnack's inequality to the nonpositive subharmonic function $\psi:=\phi-\sup_{B} \phi$, we get
$$
{\sup}_{\frac12 B}\, \psi = {\sup}_{\partial (\frac12 B)}\, \psi \leq \frac13 \cdot  \psi_{\partial\, B},
$$
i.e.,
$$
I_1  \leq  3\left( {\sup}_{B}\, \phi-{\sup}_{\frac12 B}\, \phi\right).
$$
Here the constant $1/3$ comes from the Poisson kernel of the unit disc since
$$
\inf_{|z|=1/2} \frac{1-|z|^2}{|1-z|^2}=\frac13.
$$
The following fact explains why we need such an estimate.

\medskip

\textbf{Fact 1}:
$
J_1:=\sup_{B} \phi- \sup_{\frac12 B} \phi
$ is continuous in $\hat z$ and $r$ respectively; moreover, it is increasing with respect to $r$.

\begin{proof} Since $\sup_{B} \phi$ is a convex function of $\log r$ (see \cite{Demailly}, Corollary 5.14), it follows that $J_1$ is a continuous increasing function of $r$. The continuity of $J_1$ in $\hat{z}$ is obvious.
\end{proof}

Let $\Omega_0$ be a relatively compact open subset in $\Omega$. Let $\delta_0$ denote the distance between $\overline{\Omega_0}$ and $\partial \Omega$. By the above fact we see that if the radius $r$ of $B\subset \Omega_0$ is less than $\delta_0/2$ then
$$
I_1\le 3\, {\sup}_{\hat{z}\in \Omega_0}\, J_1(\hat{z},\delta_0/2)<\infty,
$$
and if $r\ge \delta_0/2$ then
$$
I_1 \le 3\,{\sup}_{\Omega_0} \phi -3\, {\inf}_{\hat{z}\in \Omega_0}\, {\sup}_{\{|z-\hat{z}|<\delta_0/4\}}\,\phi<\infty.
$$

To estimate $I_2$, we need the following convexity lemma which was communicated to the second author by Bo Berndtsson:

\begin{lemma}\label{lm:convexity}
 Let $d\mu$ be a probability measure on a Borel measurable subset $S$ in $\mathbb R^n$ with barycenter $\hat t \in \mathbb R^n$. Let $f$ be a convex function on $\mathbb R^n$. Then
$$
\int_S f \, d\mu \geq f(\hat t).
$$
\end{lemma}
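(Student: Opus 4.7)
The plan is to prove this via the supporting-hyperplane characterization of convex functions, which is the standard route to Jensen's inequality. The key fact is that at any point $\hat{t}\in\mathbb R^n$, a convex function $f:\mathbb R^n\to\mathbb R$ admits an affine minorant that touches it at $\hat{t}$: there exists $v\in\mathbb R^n$ such that
$$
f(t)\geq L(t):=f(\hat{t})+\langle v,\,t-\hat{t}\rangle,\qquad \forall\,t\in\mathbb R^n.
$$
This is just the existence of a subgradient; it follows from the fact that the epigraph of $f$ is a convex set in $\mathbb R^{n+1}$ with nonempty interior, so the Hahn--Banach separation theorem (or directly the one-sided directional derivatives of $f$) produces such a $v$.

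Given this affine minorant, the proof is a one-liner. First I would integrate the pointwise inequality $f\geq L$ against $d\mu$:
$$
\int_S f\,d\mu\;\geq\;\int_S L\,d\mu\;=\;f(\hat{t})\cdot\mu(S)+\Bigl\langle v,\;\int_S t\,d\mu-\hat{t}\cdot\mu(S)\Bigr\rangle.
$$
Since $d\mu$ is a probability measure, $\mu(S)=1$, and by definition of the barycenter $\int_S t\,d\mu=\hat{t}$. The bracketed vector therefore vanishes, and the right-hand side collapses to $f(\hat{t})$.

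The only thing requiring a moment's thought is well-definedness of $\int_S f\,d\mu$. Because $L$ is affine and the barycenter exists (i.e., $\int_S |t|\,d\mu<\infty$), the integral $\int_S L\,d\mu$ is finite, so the negative part of $f$ is $d\mu$-integrable and $\int_S f\,d\mu\in(-\infty,+\infty]$ is unambiguously defined; the inequality then makes sense in either case. No other technicality arises, since $f$ being convex on all of $\mathbb R^n$ is automatically continuous, hence Borel-measurable. The main ``obstacle'' is therefore just invoking the subgradient existence, which is a textbook result from convex analysis; after that the computation is mechanical.
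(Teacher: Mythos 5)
Your proof is correct and takes essentially the same route as the paper's: both invoke the existence of an affine minorant (supporting hyperplane/subgradient) touching $f$ at $\hat t$, then integrate against $d\mu$ and use the barycenter condition. Your additional remarks on well-definedness of $\int_S f\,d\mu$ are a reasonable refinement but do not change the argument.
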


\begin{proof} Since $f$ is convex, there exists an affine function
$l$ such that $f(\hat t)=l(\hat t)$ and
$
f\geq l$ on $\mathbb R^n$,
which implies
$$
\int_S f\, d\mu \geq \int_S l\, d\mu = l(\hat t)=f(\hat t),
$$
where the first equality follows from the definition of  barycenter.
\end{proof}

With
$
f(t):=\phi_{\{z:|z-\hat z|=e^t r\}}
$
we have
\begin{eqnarray*}
I_2 & = & f(0)-\frac1{\pi r^2}\int_{-\infty}^0  2\pi e^{t} r\cdot f(t) \, d(e^t r)\\
& = & f(0)-\int_{-\infty}^0 f(t) \, d (e^{2t}).
\end{eqnarray*}
Since $f(t)$ is convex and $d (e^{2t})$ is a probability measure on $(-\infty, 0)$ with barycenter at $t=-1/2$, it follows from Lemma \ref{lm:convexity} that
\begin{equation}\label{eq:2.2}
\int_{-\infty}^0 f(t) \, d (e^{2t}) \geq f(-1/2),
\end{equation}
which implies
$$
I_2 \leq J_2:= f(0)-f(-1/2).
$$
Since $f$ is convex,  we get an analogous conclusion as {\bf Fact 1}:

\medskip

\textbf{Fact 2}:
$
J_2
$ is  continuous in  $\hat z$ and $r$ respectively; moreover, it is increasing with respect to $r$.

\medskip

By a similar argument as above, we may verify that
$$
{\sup}_{B\subset \Omega_0}\, I_2  <\infty.
$$

\subsection{High dimensional case}

The following result plays the role of {\bf Fact 1,2}.

\begin{lemma}\label{le:convex2}
Let $g(t)=g(t_1, \cdots, t_n)$ be a convex  function on $(-\infty, 2)^n$ which is increasing in each variable. Then
$$
{\sup}_{t\in A_N} \left[g(t)-g(t-1)\right] \leq nN \left[g(1,\cdots, 1)-g(0)\right],
$$
where $t-1:=(t_1-1, \cdots, t_n-1)$, $N\geq 1$ and
$$
A_N:=\left\{t\in (-\infty, 0]^n: \max\{-t_j\} \leq N\min\{-t_j\}\right\}.
$$
\end{lemma}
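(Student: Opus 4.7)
The plan is to reduce the $n$-dimensional estimate to $n$ one-dimensional slope estimates via coordinate-direction telescoping, and then bound each slope using the finite-type condition $A_N$ together with the convexity and monotonicity of $g$.

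First I would write
\[
 g(t)-g(t-\mathbf{1}) \;=\; \sum_{j=1}^{n}\bigl[g(P_j)-g(P_{j-1})\bigr],
\]
where $P_0:=t-\mathbf{1}$, $P_n:=t$, and $P_j:=(t_1,\ldots,t_j,\,t_{j+1}-1,\ldots,t_n-1)$, so that $P_j-P_{j-1}=e_j$. For each $j$, restrict $g$ to the line through $P_{j-1}$ in the direction $e_j$; this gives a convex function on $(-\infty,2)$. Because $t_j\le 0\le 1$, the slope monotonicity of one-dimensional convex functions yields
\[
 g(P_j)-g(P_{j-1}) \;\le\; g(Q_j)-g(Q_j-e_j),
\]
where $Q_j$ denotes the point obtained from $P_j$ by replacing its $j$-th coordinate with $1$ (so $Q_j-e_j$ has $0$ in the $j$-th slot and non-positive entries elsewhere).

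Next, I would bound each $g(Q_j)-g(Q_j-e_j)$ by $N\,[g(\mathbf{1})-g(\mathbf{0})]$. Setting $m:=\min_k(-t_k)$, the condition $t\in A_N$ forces every coordinate of $Q_j-e_j$ to lie in $[-Nm-1,0]$, and the magnitudes of the non-zero entries are comparable up to the factor $N$. Using the convexity of $g$ along a diagonal-direction line through $Q_j-e_j$, combined with the monotonicity of $g$ to replace the relevant endpoints by $\mathbf{0}$ and $\mathbf{1}$, one compares the one-dimensional slope of $g$ in the $e_j$ direction at the off-diagonal point $Q_j-e_j$ with the diagonal slope $g(\mathbf{1})-g(\mathbf{0})$; the $A_N$ structure is exactly what keeps the comparison constant at most $N$. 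Summing over $j=1,\ldots,n$ then produces $g(t)-g(t-\mathbf{1})\le nN\,[g(\mathbf{1})-g(\mathbf{0})]$.

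The hard part is this last step. A convex function on $(-\infty,2)^n$ that is increasing in each variable need not be supermodular, as one sees already with $g(x,y)=e^{\max(x,y)}$ (convex and increasing, yet $\partial_x g$ drops from $1$ to $0$ as $y$ crosses $x$). Consequently the slope of $g$ in one coordinate direction is not automatically monotone as the remaining coordinates vary, and the naive bound ``slope at $Q_j-e_j$ $\le$ slope at $\mathbf{0}$'' fails. The finite-type hypothesis $A_N$ is used precisely to control this non-monotonicity: it confines $Q_j-e_j$ to a box whose side-lengths are comparable up to the factor $N$, permitting the needed slope comparison along a carefully chosen diagonal line at the unavoidable cost of a multiplicative factor $N$ per coordinate direction.
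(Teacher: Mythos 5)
Your plan has the right shape (reduce to one-dimensional slope comparisons, use $A_N$ to pay the factor $N$) and you correctly diagnose the obstruction---convex plus increasing does not give supermodularity, so $e_j$-slopes are not monotone in the other coordinates. But the crucial step, bounding $g(Q_j)-g(Q_j-e_j)$ by $N\,[g(\mathbf 1)-g(\mathbf 0)]$, is only gestured at, and the structure you rely on to carry it out is in fact broken by your own telescoping. The point $Q_j-e_j=(t_1,\dots,t_{j-1},\,0,\,t_{j+1}-1,\dots,t_n-1)$ mixes ``updated'' coordinates $t_i$ with ``unupdated'' ones $t_i-1$; even when $t\in A_N$, the magnitudes of its nonzero entries are \emph{not} comparable up to the factor $N$. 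For example with $n=3$, $N=1$, $t=(-m,-m,-m)$ and $j=2$, the nonzero entries of $-(Q_2-e_2)$ are $m$ and $m+1$, whose ratio tends to $\infty$ as $m\to 0$. And the $j$-th coordinate of $Q_j-e_j$ is $0$, so there is no useful ray through the origin along which to run a convexity-monotonicity argument. The phrase ``the $A_N$ structure is exactly what keeps the comparison constant at most $N$'' is therefore the assertion of the lemma, not a proof of it.

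The paper avoids all of this by never leaving the diagonal: it sets $f(a)=g(t+a\mathbf 1)$, writes $f(0)-f(-1)=\int_{-1}^{0}\sum_j g_j(t+a\mathbf 1)\,da$, and for each fixed $a\in[-1,0]$ bounds $\sum_j g_j \le \tfrac{1}{\min_j(-t_j-a)}\sum_j(-t_j-a)g_j$ using $g_j\ge 0$. The weighted sum $\sum_j(-t_j-a)g_j(x)$ is recognized as the derivative of $s\mapsto g(-sx)$ at $s=-1$ (with $x=t+a\mathbf 1$), which by convexity is at most its value at $s=0$, i.e. $\sum_j(-t_j-a)g_j(0)\le \max_j(-t_j-a)\sum_j g_j(0)$. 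Since $a\le 0$ preserves $A_N$, the ratio $\max/\min$ is at most $N$ for all $a\in[-1,0]$, and $g_j(0)\le g(\mathbf 1)-g(\mathbf 0)$ finishes. The whole point of staying on the diagonal is that the comparability ratio stays uniformly $\le N$ as $a$ varies, exactly what is destroyed by your coordinate-by-coordinate telescoping. If you want to salvage your approach you would need to replace the uniform bound on $g(Q_j)-g(Q_j-e_j)$ by something summing over $j$ that exploits the interaction between the $j$ coordinate moves, which is effectively what the integral-of-gradient argument along the diagonal does.
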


\begin{proof}
A standard regularization process reduces to the case when $\phi$ is smooth. Set
$$
f(a)=g(t_1+a,\cdots, t_n+a):=g(t+a).
$$
We have
$$
f(0)-f(-1)=\int_{-1}^0 f'(a)\, da=\int_{-1}^0 \sum g_j(t+a)\, da
$$
where $g_j:=\frac{\partial g}{\partial t_j}$.
Notice that
$$
\sum g_j(t+a)  \leq \frac{1}{\min\{-t_j-a\}}  \sum  (-t_j-a)g_j(t+a)
$$
and
$$
 \sum  (-t_j-a)g_j(-s(t+a)) =\frac{dg(-s(t+a))}{d s}
$$
is an increasing function of $s\in (-\infty, 0)$ by  convexity of $g$. Thus we have
$$
 \sum  (-t_j-a)g_j(t+a) \leq  \sum  (-t_j-a)g_j(0) \leq \max\{-t_j-a\} \sum g_j(0),
$$
which implies
$$
\sum g_j(t+a)  \leq \frac{\max\{-t_j-a\}}{\min\{-t_j-a\}}  \sum g_j(0).
$$
For any $t\in A_N$,  we have $t+a \in A_N$ (since $a\leq 0$), so that
$$
 \frac{\max\{-t_j-a\}}{\min\{-t_j-a\}} \leq N.
$$
Thus
$$
g(t)-g(t-1) \leq N \sum g_j(0).
$$
Since $g$ is convex and increasing, we have
$$
g_j(0) \leq g(1,\cdots, 1)-g(0),
$$
which finishes the proof.
\end{proof}

Let $$
P:=\left\{z\in \mathbb C^n: |z_j-\hat z_j|<r_j, \ 1\leq j\leq n\right\} \subset \Omega
$$ be a polydisc of type $N$, i.e.,
$$
\max\{r_j\} \leq \min\{r_j^{1/N}\}.
$$
Similar as above, we write
$$
UO_{P} (\phi)={\sup}_P\, \phi-\phi_P =I_1+I_2,
$$
where
$$
I_1:={\sup}_P\, \phi-\phi_{\partial P}, \ \  I_2:=\phi_{\partial P}-\phi_{P},
$$
and
$$
\partial P:=\{z\in \mathbb C^n: |z_j-\hat z_j|=r_j, \ 1\leq j\leq n\}
$$
is the \emph{Shilov boundary} of $P$. Applying Harnack's inequality  (see \cite{K94}, p.\,186) $n$-times, we get the following

\begin{lemma}\label{le3.3} $I_1\leq 3^n J_1$, where $J_1:=\sup_{P} \phi-\sup_{\frac12 P} \phi$.
\end{lemma}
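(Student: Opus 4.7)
The plan is to iterate the one-dimensional Harnack estimate from Subsection 3.1, one coordinate direction at a time. First I would normalize by setting $\psi := \phi - \sup_P \phi$, which is a nonpositive plurisubharmonic function on $P$; in these terms $I_1 = -\psi_{\partial P}$ and $J_1 = -\sup_{\frac12 P}\psi$, and the desired bound $I_1 \le 3^n J_1$ is equivalent to
\begin{equation*}
\sup_{\frac12 P}\psi \ \le\ \frac{1}{3^n}\,\psi_{\partial P}.
\end{equation*}
Writing $P = B_1\times\cdots\times B_n$ where $B_j = \{|z_j-\hat z_j|<r_j\}$, I introduce for each $0\le k\le n$ the partial circular average
\begin{equation*}
T_k(z_1,\ldots,z_k) := \frac{1}{(2\pi)^{n-k}}\int_{[0,2\pi]^{n-k}} \psi\bigl(z_1,\ldots,z_k,\hat z_{k+1}+r_{k+1}e^{i\theta_{k+1}},\ldots,\hat z_n+r_n e^{i\theta_n}\bigr)\, d\theta_{k+1}\cdots d\theta_n,
\end{equation*}
so that $T_n=\psi$, $T_0=\psi_{\partial P}$, and $T_k$ is the circular mean of $T_{k+1}$ in the variable $z_{k+1}$.

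The key step is to apply the one-variable Harnack bound of Subsection 3.1 to the subharmonic nonpositive function $z_{k+1}\mapsto T_{k+1}(z_1,\ldots,z_k,z_{k+1})$ on the disc $B_{k+1}$, for every fixed $(z_1,\ldots,z_k)$; this yields
\begin{equation*}
\sup_{z_{k+1}\in\frac12 B_{k+1}} T_{k+1}(z_1,\ldots,z_k,z_{k+1}) \ \le\ \frac13\,T_k(z_1,\ldots,z_k).
\end{equation*}
Setting $M_k:=\sup\{T_k(z_1,\ldots,z_k):(z_1,\ldots,z_k)\in\frac12 B_1\times\cdots\times\frac12 B_k\}$, so that $M_0 = \psi_{\partial P}$ and $M_n = \sup_{\frac12 P}\psi$, and taking the sup over $(z_1,\ldots,z_k)$ of both sides of the previous display, one obtains $M_{k+1}\le \tfrac13 M_k$; an $n$-fold iteration gives $M_n \le 3^{-n}M_0$, which is exactly the required inequality, and rearrangement recovers $I_1\le 3^n J_1$.

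The only point requiring care is verifying the hypotheses of Harnack at each stage: I need $T_{k+1}(z_1,\ldots,z_k,\cdot)$ to be subharmonic and $\le 0$ on $B_{k+1}$ for every frozen choice of the first $k$ coordinates. Subharmonicity follows from the standard fact that circular averages preserve plurisubharmonicity (so each $T_k$ is psh on $B_1\times\cdots\times B_k$, hence subharmonic in each variable), and nonpositivity is inherited directly from $\psi\le 0$. There is no serious obstacle; the lemma is essentially a dimensional bookkeeping version of the one-variable estimate, and the factor $3^n$ arises from the $n$-fold iteration of the constant $1/3$ produced by the Poisson-kernel minimum on the circle of radius $1/2$.
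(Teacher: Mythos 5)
Your proof is correct and is exactly the detailed version of what the paper compresses into the single remark ``Applying Harnack's inequality $n$-times'': the normalization $\psi=\phi-\sup_P\phi\le 0$, the iterated circular averages $T_k$, and the one-variable Poisson-kernel estimate $\sup_{\frac12 B}\psi\le\frac13\psi_{\partial B}$ applied slice-by-slice are precisely the intended iteration, and taking suprema gives $M_{k+1}\le\frac13 M_k$ and hence the factor $3^n$.
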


Using \eqref{eq:2.2} repeatedly we get

\begin{lemma}\label{le3.4} $I_2\leq  J_2$, where $J_2:=f(0)-f(-1/2, \cdots, -1/2)$ with
$$
f(t):=\phi_{\{z\, :\, |z_j-\hat z_j|=e^{t_j} r_j,\ 1\le j\le n\}}.
$$
\end{lemma}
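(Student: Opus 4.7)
The plan is to represent $\phi_P$ as the integral of the spherical mean $f$ against a product probability measure whose barycenter is $(-1/2,\ldots,-1/2)$, and then apply the one-dimensional inequality \eqref{eq:2.2} repeatedly, one coordinate at a time.

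First, parametrize each factor of $P$ by polar coordinates $z_j=\hat z_j+s_j e^{i\theta_j}$ and substitute $s_j=e^{t_j}r_j$ with $t_j\in(-\infty,0]$. As in the one-dimensional computation on which \eqref{eq:2.2} rests, the normalized volume measure on $P$ factors as
\begin{equation*}
\frac{1}{|P|}\,dV(z)=\prod_{j=1}^{n} d(e^{2t_j})\otimes\frac{d\theta_j}{2\pi},
\end{equation*}
so that integrating out the $\theta_j$'s yields
\begin{equation*}
\phi_P=\int_{(-\infty,0]^n} f(t_1,\ldots,t_n)\,d(e^{2t_1})\cdots d(e^{2t_n}),
\end{equation*}
while $\phi_{\partial P}=f(0,\ldots,0)$ by the very definition of $f$.

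The next step is to check that for every fixed choice of the remaining coordinates, the map $t_j\mapsto f(\ldots,t_j,\ldots)$ is convex. Indeed, for fixed $z_k$ with $k\ne j$, the section $z_j\mapsto\phi(z)$ is subharmonic, so its circular average in $z_j$ is a convex function of $\log s_j=t_j+\log r_j$; this convexity survives averaging over the remaining circles in the $z_k$'s, since convexity is preserved under integration in auxiliary parameters.

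Finally, I apply \eqref{eq:2.2} one variable at a time via Fubini. Fixing $t_2,\ldots,t_n$ and integrating in $t_1$ gives
\begin{equation*}
\int_{-\infty}^0 f(t_1,t_2,\ldots,t_n)\,d(e^{2t_1})\ge f(-1/2,t_2,\ldots,t_n);
\end{equation*}
iterating this in $t_2,\ldots,t_n$ (each step justified by the same sectional convexity remark, with the previously handled coordinates frozen at $-1/2$) leads to $\phi_P\ge f(-1/2,\ldots,-1/2)$, which yields $I_2=f(0)-\phi_P\le J_2$. The only point that requires genuine care is the sectional convexity claim; once that is in hand, the rest is a direct $n$-fold iteration of \eqref{eq:2.2}.
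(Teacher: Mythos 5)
Your proof is correct and is exactly the intended argument: the paper's proof consists of the single sentence ``Using \eqref{eq:2.2} repeatedly we get,'' and your write-up makes precise what that iteration means, including the change of variables producing the product measure $\prod_j d(e^{2t_j})$, the identification $\phi_{\partial P}=f(0)$, and the sectional convexity that justifies applying \eqref{eq:2.2} in each variable with the others frozen. No gaps.
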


Since both $\sup_{P} \phi$ and $\phi_{\partial P}$ are continuous in $\hat z_j$ and convex increasing with respect to $\log r_j$ for all $j$, it follows from Lemma \ref{le:convex2} (through a similar argument as the one-dimensional case) that
$$
{\sup}_{P\in {\mathcal P}_N|_{\Omega_0}
} \left(J_1+J_2\right) <\infty,
$$
for every  open set $\Omega_0\subset\subset\Omega$, which finishes the proof of the first part of Theorem \ref{th:Main1}.

\subsection{A counterexample} For the second part of Theorem \ref{th:Main1}, we need to construct a counterexample. For the sake of simplicity, we only consider the case  $n=2$. It suffices to verify the following

\begin{theorem}
\label{th:example}
Set
$
\phi(z, w):=-\sqrt{(\log|z|+\log|w|)\log|w|},
$
 $z,w\in\mathbb D$. Then we have $\phi\in PSH(\mathbb D^2)$, while
$$
\sup_{0<r_1, r_2<1} \frac{1}{|\mathbb D^2_r|}\int_{\mathbb D^2_r} |\phi-\phi_{\mathbb D^2_r}| =\infty,
$$
where
$$
\mathbb D^2_r:=\left\{(z,w)\in\mathbb C^2: |z|<r_1, \, |w|<r_2 \right\}.
$$
\end{theorem}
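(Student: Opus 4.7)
The plan has two parts: show $\phi\in PSH(\mathbb{D}^2)$, then exhibit a sequence of origin-centered polydiscs along which the mean oscillation diverges.

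\smallskip

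\emph{Plurisubharmonicity.} I would use the elementary identity
\[
-\sqrt{AB}=\sup_{\lambda>0}\frac{\lambda A+B/\lambda}{2},\qquad A,B\in[-\infty,0],
\]
obtained by optimizing in $\lambda$: for $A,B<0$ the function $\lambda\mapsto(\lambda A+B/\lambda)/2$ is concave with maximum at $\lambda_0=\sqrt{B/A}$, where each summand equals $-\frac{1}{2}\sqrt{AB}$; the boundary cases $A,B\in\{0,-\infty\}$ are immediate. Applying this with $A=\log|zw|$ and $B=\log|w|$, both psh and $\le 0$ on $\mathbb{D}^2$, gives
\[
\phi(z,w)=\sup_{\lambda>0}\Bigl(\frac{\lambda}{2}\log|zw|+\frac{1}{2\lambda}\log|w|\Bigr).
\]
Each function under the supremum is a positive linear combination of psh functions, hence psh. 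Since $\phi$ itself is upper semicontinuous -- it is continuous off $\{zw=0\}$ and tends to $-\infty$ as $(z,w)\to\{zw=0\}$ because $(\log|zw|)\log|w|\to+\infty$ there -- the supremum is psh.

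\smallskip

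\emph{Divergence of the oscillation.} I would fix $r_2=e^{-1}$ and let $r_1\to 0$; write $R_j=-\log r_j$. By the rotational symmetry of $\phi$ in each variable, the substitution $z=r_1\zeta$, $w=r_2\omega$ followed by $X=-\log|\zeta|$, $Y=-\log|\omega|$ realizes $X,Y$ as i.i.d.\ $\mathrm{Exp}(2)$ random variables under the normalized Lebesgue measure on $\mathbb{D}^2$, and
\[
\phi(r_1\zeta,r_2\omega)=-\sqrt{R_1}\,U_{R_1},\quad U_{R_1}:=\sqrt{1+\frac{R_2+X+Y}{R_1}}\sqrt{R_2+Y}.
\]
As $R_1\to\infty$, $U_{R_1}\to\sqrt{R_2+Y}$ pointwise, and for $R_1\ge 1$ one has $U_{R_1}\le\sqrt{1+R_2+X+Y}\sqrt{R_2+Y}$, an integrable bound (dominate the product by an arithmetic mean). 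Dominated convergence therefore yields
\[
\frac{MO_{\mathbb{D}^2_r}(\phi)}{\sqrt{R_1}}=\mathbb{E}\bigl|U_{R_1}-\mathbb{E}U_{R_1}\bigr|\longrightarrow\mathbb{E}\bigl|\sqrt{R_2+Y}-\mathbb{E}\sqrt{R_2+Y}\bigr|,
\]
a strictly positive quantity since $\sqrt{R_2+Y}$ is not almost-surely constant. Hence $MO_{\mathbb{D}^2_r}(\phi)\to\infty$ along $r_1\to 0$, which suffices.

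\smallskip

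The decisive step is the first: a direct computation of $i\partial\bar\partial\phi$ would be disagreeable because of the singularity on $\{zw=0\}$ and the non-smoothness where either $\log|zw|$ or $\log|w|$ vanishes, but the supremum representation bypasses these difficulties entirely. The divergence of the oscillation is then a routine asymptotic calculation, made transparent by the probabilistic reformulation that exploits the bi-rotational symmetry of $\phi$.
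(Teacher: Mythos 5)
Your proof is correct, and both halves take a genuinely different route from the paper. For plurisubharmonicity, the paper invokes the general principle that a convex function increasing in each variable, composed with psh functions, is psh, and verifies by ``straightforward calculation'' that $f(x,y)=-\sqrt{(x+y)y}$ is convex and increasing on $(-\infty,0)^2$; you instead write $\phi$ explicitly as a supremum of functions of the form $\frac{\lambda}{2}\log|zw|+\frac{1}{2\lambda}\log|w|$, each a nonnegative combination of psh functions. The two arguments are close in spirit (your supremum representation is precisely the Legendre-type witness of the convexity and monotonicity the paper asserts), but yours has the merit of making the convexity transparent rather than leaving it to a calculation; you also correctly supply the required upper semicontinuity of the supremum, which is the one point that must not be skipped. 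For the divergence, the paper first shows the \emph{upper} oscillation blows up (Lemma 3.7, via integration by parts and direct estimates) and then transfers this to the mean oscillation through the convexity inequality $\phi_{\mathbb{D}^2_r}\ge\sup_{\mathbb{D}^2_{e^{-1/2}r}}\phi$; you attack the mean oscillation directly, fixing $r_2$ and letting $r_1\to0$, and use the observation that $X=-\log|\zeta|$, $Y=-\log|\omega|$ are i.i.d.\ $\mathrm{Exp}(2)$ under normalized Lebesgue measure on $\mathbb{D}^2$ together with dominated convergence to extract the $\sqrt{R_1}$ growth rate with an explicit positive limiting constant. Your route is arguably more direct for the statement as phrased (which concerns $MO$, not $UO$), and it quantifies the rate of blow-up; the paper's route is chosen to highlight the BUO-versus-BMO theme and to reuse the convexity lemma already established. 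Both are valid.
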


The following lemma shows that {\bf Fact 1, 2}  is no more true for general bidiscs.

\begin{lemma}
\label{le:2.5}
$f(x,y):=-\sqrt{(x+y)y}$ is  convex  on $ (-\infty, 0)^2$ and increasing in each variable; moreover,
\begin{equation}\label{eq:2.4}
{\sup}_{\{x, y\leq -1\}} \left[f(x,y)-f(x-1, y-1)\right]=\infty.
\end{equation}
\end{lemma}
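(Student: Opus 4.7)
The plan is to reduce everything to positive variables and then exploit a perspective structure. Set $u=-x$, $v=-y$, so $u,v>0$, and write
$$
f(x,y)=-\sqrt{(x+y)y}=-\sqrt{(u+v)v}=:h(u,v).
$$
Monotonicity in $x,y$ is immediate: from $\partial_x f=-y/(2\sqrt{(x+y)y})$ and $\partial_y f=-(x+2y)/(2\sqrt{(x+y)y})$, both numerators are positive on $(-\infty,0)^2$, so $f$ is strictly increasing in each variable.

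For convexity, I would use the perspective trick. Since $h$ is positively homogeneous of degree $1$, write
$$
h(u,v)=-v\sqrt{\tfrac{u}{v}+1}=-v\,\varphi(u/v),\qquad \varphi(s):=\sqrt{s+1}.
$$
A direct computation gives $\varphi''(s)=-\tfrac14(s+1)^{-3/2}<0$, so $\varphi$ is concave on $(0,\infty)$; equivalently $-\varphi$ is convex. The perspective $(u,v)\mapsto v(-\varphi)(u/v)$ of a convex function is convex on $(0,\infty)\times(0,\infty)$ (this is a standard fact from convex analysis), which is precisely $h$. Hence $h$, and therefore $f$, is convex. Anyone allergic to invoking the perspective lemma can alternatively verify that the Hessian of $h$ is positive semidefinite by direct calculation; setting $g:=(u+v)v$, the Hessian entries are rational in $u,v$ and the determinant reduces to a nonnegative multiple of $v^2/(4g^2)$.

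For the divergence statement \eqref{eq:2.4}, substitute $s:=-(x+y)$ and $t:=-y$. The constraint $x,y\le -1$ becomes $t\ge 1$ and $s\ge t+1$, and
$$
f(x,y)-f(x-1,y-1)=\sqrt{(s+2)(t+1)}-\sqrt{st}.
$$
Fix $t=1$ and let $s\to\infty$: the expression becomes
$$
\sqrt{2s+4}-\sqrt{s}=\sqrt{s}\bigl(\sqrt{2+4/s}-1\bigr)\longrightarrow\infty,
$$
which proves the supremum is $+\infty$.

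The only mildly delicate step is the convexity of $h$; the perspective argument handles it cleanly, but if one prefers a self-contained proof, the Hessian computation (while routine) is the one place where care is needed to get the signs right. Everything else is a direct calculation.
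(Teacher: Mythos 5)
Your proof is correct. For the divergence \eqref{eq:2.4}, after undoing your substitution $s=-(x+y)$, $t=-y$, fixing $t=1$ and letting $s\to\infty$ is exactly the paper's choice of fixing $y=-1$ and letting $x\to-\infty$; the paper merely rationalizes the difference to $\frac{5-x}{\sqrt{6-2x}+\sqrt{1-x}}$ before taking the limit, but the content is identical. For the convexity claim the paper only says it ``follows by a straightforward calculation,'' presumably meaning a Hessian check; your observation that $h(u,v)=-\sqrt{(u+v)v}$ is positively homogeneous of degree one and hence is the perspective $v(-\varphi)(u/v)$ of the convex function $-\varphi$, $\varphi(s)=\sqrt{s+1}$, is a genuinely cleaner route that avoids the $2\times 2$ Hessian entirely and makes the convexity structurally transparent. (Since $f(x,y)=h(-x,-y)$ is $h$ precomposed with a linear map, convexity of $h$ passes to $f$, which you use implicitly; worth stating.) Both the monotonicity check and the change of variables are correct.
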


\begin{proof}
The first conclusion follows by a straightforward calculation. For \eqref{eq:2.4} it suffices to note that
$$
f(x,-1)-f(x-1, -2)=\frac{5-x}{\sqrt{6-2x}+\sqrt{1-x}} \to \infty
$$
as  $x\to-\infty$. The proof is complete.
\end{proof}

Let us first verify that $\phi \notin BUO_{\rm loc}(\mathbb D^2, \mathcal P)$.

\begin{lemma}\label{lm:NonBUO}
$\sup_{0<r_1, r_2<1} \sup_{\mathbb D^2_r}(\phi- \phi_{\mathbb D^2_r})=\infty$.
\end{lemma}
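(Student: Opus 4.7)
\medskip

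\noindent\textbf{Proof plan.} The idea is to make $\sup \phi - \phi_{\mathbb D^2_r}$ large by taking a very anisotropic polydisc: I fix $r_2 = e^{-1}$ and let $r_1 \to 0+$. The key point is that because $\phi$ degenerates like $-\sqrt{|\log z|}$ in the direction of $z$, the average over the polydisc picks up a strictly larger multiple of $\sqrt{|\log r_1|}$ than the supremum does. I will \emph{not} use Jensen's inequality on the full function $\phi$ (which goes the wrong way for this question); instead I bound the integrand of $\phi_{\mathbb D^2_r}$ pointwise from below.

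\smallskip

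\noindent\emph{Step 1 (supremum).} A direct check of the partial derivatives of $(\log|z|+\log|w|)\log|w|$ with respect to $\log|z|$ and $\log|w|$ shows this quantity is \emph{decreasing} in each of $|z|, |w|$ on $\mathbb D^2$, so $\phi$ is increasing in each, and the corner gives
\[
\sup_{\mathbb D^2_r}\phi \,=\, \phi(r_1,r_2)\,=\,-\sqrt{1+|\log r_1|}
\]
when $r_2=e^{-1}$.

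\smallskip

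\noindent\emph{Step 2 (integral form of the mean).} Passing to polar coordinates and substituting $\rho_j = r_j e^{u_j}$ (so that the standard area element becomes the probability density $2e^{2u}\,du$ on $(-\infty,0)$), one obtains
\[
\phi_{\mathbb D^2_r} \,=\, -\int_{-\infty}^0\!\!\int_{-\infty}^0 4\,e^{2u+2v}\,\sqrt{\bigl(|\log r_1|+1+|u|+|v|\bigr)\bigl(1+|v|\bigr)}\;du\,dv,
\]
where care must be taken with signs: $\log\rho_j<0$, the two factors inside the original square root are both $\le 0$, and their product is positive.

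\smallskip

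\noindent\emph{Step 3 (pointwise lower bound).} Since $|\log r_1|+1+|u|+|v|\ge |\log r_1|$, the integrand in Step 2 satisfies the pointwise estimate
\[
\sqrt{\bigl(|\log r_1|+1+|u|+|v|\bigr)(1+|v|)}\,\ge\,\sqrt{|\log r_1|}\cdot\sqrt{1+|v|}.
\]
Integrating out $u$ (contributing $\tfrac12$) gives
\[
-\phi_{\mathbb D^2_r}\,\ge\,2C\sqrt{|\log r_1|},\qquad C:=\int_{-\infty}^0 2e^{2v}\sqrt{1+|v|}\,dv.
\]
Because $\sqrt{1+|v|}\ge 1$ on $(-\infty,0)$ with strict inequality on a set of positive measure, $C>1$ (and only the weak bound $C\ge 1$ will be needed). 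Combining with Step 1,
\[
\sup_{\mathbb D^2_r}\phi-\phi_{\mathbb D^2_r}\,\ge\,2C\sqrt{|\log r_1|}-\sqrt{1+|\log r_1|}\,\longrightarrow\,\infty\quad\text{as }r_1\to 0+,
\]
since $2C>1$ and $\sqrt{1+s}=\sqrt{s}+o(\sqrt{s})$ for $s\to\infty$. This proves the lemma.

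\smallskip

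\noindent\emph{Main obstacle.} The computation itself is elementary; the only delicate point is Step 2, where one must carefully track signs so that the two negative factors under the square root become positive, and verify that the polar change of variables gives the stated probability measure $4e^{2u+2v}du\,dv$ on $(-\infty,0)^2$. After that, the whole argument reduces to the trivial inequality $\sqrt{A+B}\ge\sqrt{A}$ applied inside the integral, which is exactly the opposite direction from Jensen's inequality used elsewhere in the paper.
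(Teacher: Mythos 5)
You take the same asymptotic regime as the paper (fix $r_2=e^{-1}$, let $r_1\to0+$) and the same reduction of $\phi_{\mathbb D^2_r}$ to an integral against the probability measure $4e^{2u+2v}\,du\,dv$ on $(-\infty,0)^2$, but your estimation step is genuinely different: the paper integrates by parts in $t$ and $s$ to express $\sup_{\mathbb D^2_r}\phi-\phi_{\mathbb D^2_r}$ through the partial derivatives of $f(x,y)=-\sqrt{(x+y)y}$, splitting it as $I_1+I_2$ and showing that $I_1(x,-1)\to\infty$ while $I_2(x,-1)$ stays bounded, whereas you bound the integrand pointwise from below via $\sqrt{(|\log r_1|+1+|u|+|v|)(1+|v|)}\ge\sqrt{|\log r_1|}\,\sqrt{1+|v|}$ and then compare coefficients of $\sqrt{|\log r_1|}$. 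Your route is more elementary (no integration by parts, no bookkeeping of two derivative terms) and would feed into the deduction of Theorem \ref{th:example} just as well; what the paper's form buys is an explicit identification of which term produces the divergence, but either version suffices for the lemma. Your Steps 1 and 2 (monotonicity of $\phi$ in $|z|,|w|$, and the polar-coordinate formula for the mean) are correct.

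One slip to fix in Step 3: integrating out $u$ against $4e^{2u+2v}$ gives $4\cdot\tfrac12\cdot\int_{-\infty}^0 e^{2v}\sqrt{1+|v|}\,dv=C$ with your $C=\int_{-\infty}^0 2e^{2v}\sqrt{1+|v|}\,dv$, not $2C$; you count the factor $2$ once inside the definition of $C$ and once again outside. Hence the correct bound is $-\phi_{\mathbb D^2_r}\ge C\sqrt{|\log r_1|}$, and your remark that only the weak bound $C\ge1$ is needed becomes false: with the corrected constant, $C\ge 1$ alone would give $\sqrt{|\log r_1|}-\sqrt{1+|\log r_1|}$, which stays bounded, so the strict inequality $C>1$ is genuinely required. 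Fortunately you do establish $C>1$ (a probability average of $\sqrt{1+|v|}>1$), so $\sup_{\mathbb D^2_r}\phi-\phi_{\mathbb D^2_r}\ge\sqrt{|\log r_1|}\,\bigl(C-\sqrt{1+1/|\log r_1|}\,\bigr)\to\infty$ and the argument closes; just delete the spurious factor $2$ and lean on the strict inequality rather than $C\ge 1$.
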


\begin{proof} With $x=\log r_1$ and $y=\log r_2$, we get
$$
{\sup}_{\mathbb D^2_r} \left(\phi- \phi_{\mathbb D^2_r}\right)= f(x,y)-\int_{-\infty}^0 \int_{-\infty}^0  f(x+t, y+s) \, de^{2t} d e^{2s}=: I(x,y).
$$
Integrate by parts with respect to $t$ and $s$ successively, we may write
$$
I(x,y)=I_1 +I_2,
$$
where
$$
I_1=\int_{-\infty}^0 \frac{x+2y+2s}{-4f(x+t, y+s)} \, de^{2s}
$$
and
$$
I_2=\int_{(-\infty, 0)^2} \frac{y+s}{-4f(x+t, y+s)} \, de^{2t} d e^{2s}.
$$
Obviously,  $I_2(x,-1)$ is bounded on $(-\infty, 0]$, but  $I_1(x,-1)\to \infty$  as $x\to -\infty$, from which the assertion immediately follows.
\end{proof}

\begin{proof}[Proof of Theorem \ref{th:example}] By Lemma \ref{lm:convexity} we have (still with $x=\log r_1,\,y=\log r_2$)
\begin{eqnarray*}
\phi_{\mathbb D^2_r} & = & \int_{-\infty}^0 \int_{-\infty}^0 f(x+t, y+s) \, de^{2t} d e^{2s} \\
& \geq & f(x-1/2, y-1/2)= {\sup}_{\mathbb D^2_{e^{-1/2} r}}\,\phi,
\end{eqnarray*}
which yields
\begin{eqnarray*}
\frac{1}{|\mathbb D^2_r|}\int_{\mathbb D^2_r} |\phi-\phi_{\mathbb D^2_r}|
& \geq &
\frac{1}{|\mathbb D^2_r|}\int_{\mathbb D^2_{e^{-1/2} r}}   \left({\sup}_{\mathbb D^2_{e^{-1/2} r}}\,\phi - \phi\right)\\
 & = & e^{-2}  \left(  { \sup}_{\mathbb D^2_{e^{-1/2} r}}\,\phi- \phi_{\mathbb D^2_{e^{-1/2} r}}    \right).
\end{eqnarray*}
By a similar argument as  Lemma \ref{lm:NonBUO}, we conclude the proof of Theorem \ref{th:example}.
\end{proof}

\subsection{Lelong class}

In this section we shall prove the third part of Theorem \ref{th:Main1}. The key ingredient is the following counterpart of Lemma \ref{le:convex2}.

\begin{lemma}\label{le:convex3}
Let $g(t)=g(t_1, \cdots, t_n)$ be a convex  function on $\mathbb R^n$ which is increasing in each variable. Assume that
$$
g(t) \leq \max_{1\leq j\leq n}\{\log(1+e^{t_j})\}, \ \ \forall \ t\in\mathbb R^n.
$$
Then for every $M>0$ we have
$$
{\sup}_{t\in \mathbb R^n} \left[g(t)-g(t-M)\right] \leq M,
$$
where $t-M:=(t_1-M, \cdots, t_n-M)$.
\end{lemma}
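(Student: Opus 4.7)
The plan is to mirror the proof of Lemma \ref{le:convex2}, but anchoring the comparison at $a\to+\infty$ rather than at the origin. As in that lemma, a standard convolution $g_\varepsilon:=g*\rho_\varepsilon$ reduces matters to smooth $g$: using the elementary inequality $\log(1+e^{x+\varepsilon})\leq \varepsilon+\log(1+e^x)$ (which follows from $1\leq e^\varepsilon$), the growth hypothesis survives mollification up to an additive $\varepsilon$, so $g_\varepsilon-\varepsilon$ still satisfies the original bound. Once the smooth case yields $g_\varepsilon(t)-g_\varepsilon(t-M\mathbf 1)\leq M$, one lets $\varepsilon\to 0$ by pointwise convergence.

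Assume henceforth that $g$ is smooth, fix $t\in\mathbb R^n$, set $\mathbf 1:=(1,\dots,1)$, and put $h(a):=g(t+a\mathbf 1)$. Since $g$ is increasing in each variable, $g_j:=\partial g/\partial t_j\geq 0$, and
\begin{equation*}
g(t)-g(t-M\mathbf 1)=h(0)-h(-M)=\int_{-M}^{0}\sum_{j=1}^n g_j(t+a\mathbf 1)\,da,
\end{equation*}
so it suffices to prove the pointwise estimate $\sum_{j}g_j(s)\leq 1$ for every $s\in\mathbb R^n$.

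To obtain this, fix $s$ and consider $\tilde h(a):=g(s+a\mathbf 1)$, which is convex on $\mathbb R$ as the restriction of a convex function to a line. Since $\tilde h'$ is non-decreasing, the right-secant inequality gives $\sum_{j}g_j(s)=\tilde h'(0)\leq (\tilde h(a)-\tilde h(0))/a$ for every $a>0$. The growth hypothesis yields $\tilde h(a)\leq \max_j\log(1+e^{s_j+a})\leq a+C_s$ for some constant $C_s$ depending only on $s$, so dividing by $a$ and letting $a\to+\infty$ forces $\tilde h'(0)\leq 1$. Integrating this pointwise bound over $[-M,0]$ concludes the proof. The only genuinely delicate bookkeeping is the regularization step; the conceptual heart of the argument is simply that the asymptotic slope of $g$ along the diagonal direction $\mathbf 1$ is at most $1$, which by convexity already forces $\sum_j g_j\leq 1$ everywhere, and I do not anticipate any other real obstacle.
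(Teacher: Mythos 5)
Your argument is correct and rests on exactly the same idea as the paper's: restrict $g$ to the diagonal line $s\mapsto g(t+s\mathbf 1)$, use convexity to compare the slope over $[-M,0]$ with the limiting slope as $s\to+\infty$, and bound that limit by $1$ via the growth hypothesis. The only real difference is cosmetic: you pass through the pointwise derivative bound $\sum_j g_j\leq 1$, which forces a mollification step, whereas the paper works directly with the secant inequality $\frac{f(0)-f(-M)}{M}\leq\lim_{s\to\infty}\frac{f(s)-f(0)}{s}$ for the convex function $f(s)=g(t+s\mathbf 1)$, so no regularization is needed and the smoothness detour can be dropped entirely.
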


\begin{proof} For fixed $t$, we consider the following convex increasing function
$$
f(s):=g(t_1+s, \cdots, t_n+s)
$$
on $\mathbb R$. Convexity of $f$ gives
$$
\frac{f(0)-f(-M)}M \leq \lim_{s\to \infty} \frac{f(s)-f(0)}{s}.
$$
By the assumption, we have
$$
f(0) \leq f(s) \leq  \max_{1\leq j\leq n}\{\log(1+e^{t_j} e^s)\}
$$
for every $s\geq 0$, so that
$$
\lim_{s\to \infty} \frac{f(s)-f(0)}{s}\leq 1.
$$
The proof is complete.
\end{proof}

\begin{proof}[Proof of  the third part of Theorem \ref{th:Main1}]
Again for any polydisc
$$
P:=\left\{z\in \mathbb C^n: |z_j-\hat z_j|<r_j, \ 1\leq j\leq n\right\},
$$
we may write
$$
UO_{P} (\phi)={\sup}_P\, \phi-\phi_P =I_1+I_2,
$$
where
$$
I_1:={\sup}_P\, \phi-\phi_{\partial P}, \ \  I_2:=\phi_{\partial P}-\phi_{P}.
$$
By Lemma \ref{le3.3} we have
$$
I_1\leq 3^n \left({\sup}_P\, \phi-{\sup}_{\frac12 P}\, \phi\right).
$$
Put
$$
P_t:=\left\{z\in \mathbb C^n: |z_j-\hat z_j|<e^{t_j}r_j, \ 1\leq j\leq n\right\}
$$
and $f_1(t):=\sup_{P_t} \phi$.
Since $\phi\in {\mathcal L}$,  we know that for some constant $c_1\gg 1$  the function $f_1-c_1$ satisfies the assumption in Lemma \ref{le:convex3}, so that
$$
{\sup}_{P}\, \phi-{\sup}_{\frac12 P}\,\phi=f_1(0)-f_1(-\log 2) \leq \log 2,
$$
which in turn implies
$$
I_1\leq 3^n \log 2.
$$
Moreover, we infer from Lemma \ref{le3.4} that
$$
I_2\leq f(0)-f(-1/2, \cdots, -1/2),  \ \  f(t):=\phi_{\partial P_t}.
$$
Applying Lemma \ref{le:convex3} in a similar way as above, we have
$$
I_2 \leq 1/2.
$$
Thus
$$
UO_{P} (\phi) \leq 3^n \log 2+ 1/2 < 3^n,
$$
which finishes the proof.
\end{proof}

\section{Proof of Theorem \ref{th:remez1}}

  The starting point is the following

\begin{definition}[$\gamma$-constant] We shall define the constant $\gamma$ as the BUO norm of\/ $\log |z|$ on $\mathbb C$ with respect to all line segments. More precisely,
$$
\gamma:=\sup_{a\neq b\in \mathbb C} UO_{[a,b]} (\log |z|),
$$
where $[a,b]$ denotes the line segment connecting $a$ and $b$,  and the upper oscillation is defined by
$$
UO_{[a,b]} (\log |z|) :=\left({\sup}_{0\leq t\leq 1}\,\log |a(1-t)+bt| \right)- \int_{0}^1  \log |a(1-t)+bt| \, dt.
$$
\end{definition}

The key step is to show the following

\begin{lemma}\label{lm:gamma}
$1<\gamma<2$ is determined by
\begin{equation}\label{eq:gamma}
\gamma+\log(\gamma-1)=0.
\end{equation}
\end{lemma}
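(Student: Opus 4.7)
The plan is to compute the supremum $\gamma$ explicitly by reducing to a one-parameter optimization problem. First I would write
$$UO_{[a,b]}(\log|z|)=M(a,b)-I(a,b),$$
where $M(a,b):=\sup_{0\le t\le 1}\log|(1-t)a+tb|$ and $I(a,b):=\int_0^1\log|(1-t)a+tb|\,dt$. Since $t\mapsto|(1-t)a+tb|$ is convex on $[0,1]$, its maximum is attained at an endpoint, so $M(a,b)=\log\max(|a|,|b|)$. After swapping $a$ and $b$ if necessary I may assume $|a|\le|b|$, and by rotational invariance of $\log|\cdot|$ may further assume $b>0$ is real, so that $M(a,b)=\log b$.

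Next I would show that the sup is realized by collinear configurations with the origin strictly between $a$ and $b$. Writing $a=|a|e^{i\theta}$, a direct expansion gives
$$|(1-t)a+tb|^2=(1-t)^2|a|^2+2t(1-t)|a|\,b\cos\theta+t^2b^2,$$
which, for each fixed $t\in[0,1]$, is minimized in $\theta$ at $\theta=\pi$. Since $M(a,b)$ depends only on $|a|$ and $|b|$, integrating this pointwise inequality in $t$ shows that, for fixed moduli, $UO_{[a,b]}(\log|z|)$ is maximized when $a=-|a|$ and $b=|b|$. It therefore suffices to compute the sup over the one-parameter family of segments $[-r,R]$ with $0\le r\le R$.

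For such a segment, the substitution $u=(R+r)t-r$ gives
$$I=\frac{1}{R+r}\int_{-r}^R\log|u|\,du=\frac{r\log r+R\log R}{R+r}-1,$$
and hence, writing $x:=r/R\in[0,1]$,
$$UO_{[-r,R]}(\log|z|)=\log R-I=1-\frac{x\log x}{1+x}=:\phi(x).$$
A direct computation yields $\phi'(x)=-(1+x+\log x)/(1+x)^2$. The map $x\mapsto 1+x+\log x$ is strictly increasing on $(0,1]$ from $-\infty$ to $2$, so it has a unique root $x_0\in(0,1)$; this is the unique critical point of $\phi$ and is a maximum. Using the defining relation $\log x_0=-1-x_0$,
$$\phi(x_0)=1-\frac{x_0(-1-x_0)}{1+x_0}=1+x_0.$$
Setting $\gamma:=\phi(x_0)=1+x_0$ gives $x_0=\gamma-1$, and the equation $1+x_0+\log x_0=0$ rewrites as $\gamma+\log(\gamma-1)=0$, equivalently $\gamma=1+e^{-\gamma}$. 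Since $\gamma\mapsto\gamma-1-e^{-\gamma}$ is strictly increasing with values $-e^{-1}<0$ at $\gamma=1$ and $1-e^{-2}>0$ at $\gamma=2$, the unique root lies in $(1,2)$.

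The only genuinely non-mechanical step will be the pointwise-in-$t$ minimization in the second paragraph, which collapses the two complex degrees of freedom of the segment to the single ratio $x=r/R$. The rest is one-variable calculus; the endpoint values $\phi(0^+)=\phi(1)=1<\gamma$ confirm that the interior critical point supplies the true supremum, so no separate boundary analysis is needed.
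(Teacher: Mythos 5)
Your proof is correct and follows essentially the same route as the paper: reduce by symmetry to real segments through the origin, then optimize the one-variable function $\phi(x)=1-\frac{x\log x}{1+x}$, which is exactly the paper's $\frac{a\log(-a)}{1-a}+1$ under $a=-x$. The only difference is cosmetic: you rotate $a$ to argument $\pi$, obtaining $a=-|a|$ in one step, whereas the paper first replaces $a$ by ${\rm Re}\,a$ and then treats the cases ${\rm Re}\,a\ge 0$ and ${\rm Re}\,a<0$ separately before optimizing via the substitution $t^{-1}=1-a$.
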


\begin{proof}

For each pair $a, b \in \mathbb C$, we shall compute
$$
UO_{[a,b]} (\log |z|)={\sup}_{[a,b]}\, \log |z|-(\log |z|)_{[a,b]}.
$$
Since $\log |z|$ is $S^1$-invariant, by a rotation of $z$, we may assume that
$$
b \in \mathbb R,\ \ \    b>|a|.
$$
Thus
$$
{\sup}_{[a,b]}\, \log |z|=\log b
$$
is independent of $a$.
Since
\begin{eqnarray*}
(\log |z|)_{[a,b]} & = & \int_{0}^1  \log |a(1-t)+bt| \, dt\\
& \ge & \int_{0}^1  \log |{\rm Re\,}a\cdot(1-t)+bt| \, dt\\
& = & (\log |z|)_{[{\rm Re\,}a,b]}
\end{eqnarray*}
with equality holds if and only if $a\in \mathbb R$. Thus it  suffices to verify \eqref{eq:gamma} for
$$
a, b\in \mathbb R, \ \ \ |a|<b.
$$
Consider $\log|z|-\log b$ instead of $\log|z|$, one may further assume that
$$
b=1, \ \ \ -1<a<1,
$$
which implies
$$
UO_{[a,1]} (\log|z|)= \log 1- (\log|z|)_{[a, 1]}=-(\log|z|)_{[a, 1]}.
$$
We divide into two case.
(i) $0\leq a<1$.  Then we have
$$
-(\log|z|)_{[a, 1]}=\frac{-1}{1-a} \int_{a}^1 \log x \, dx=\frac{a\log a}{1-a}+1 \leq 1.
$$
(ii) $-1<a<0$. Then we have
$$
-(\log|z|)_{[a, 1]}=\frac{-1}{1-a} \int_{a}^1 \log |x| \, dx=\frac{a\log (-a)}{1-a}+1 >1.
$$
Thus
$$
\gamma=\sup_{-1<a<0} \frac{a\log (-a)}{1-a}+1.
$$
It suffices to verify that $\gamma$ satisfies $(\ref{eq:gamma})$.
To see this,  put
$$
t^{-1}:=1-a \in (1,2)
$$
and write
$$
\frac{a\log (-a)}{1-a}= (1-t)\log t-(1-t)\log (1-t)=:f(t).
$$
Since
$$
f'(t)=t^{-1}-\log t +\log(1-t),
$$
it follows that $f'(t)=0$ if and only if
$$
t^{-1}=\log \frac{1}{t^{-1}-1},
$$
i.e.,
$$
1-a+\log(-a)=0.
$$
Thus we have
$$
\gamma=\sup_{-1<a<0} \frac{a\log (-a)}{1-a}+1= \frac{a_0\log (-a_0)}{1-a_0}+1,
$$
where $a_0$ is determined by
\begin{equation}\label{eq:a_0}
1-a_0+\log(-a_0)=0,
\end{equation}
which gives
$$
\gamma=1-a_0 \in (1,2).
$$
It is clear that $(\ref{eq:a_0})$ is equivalent to $(\ref{eq:gamma})$.
\end{proof}

 Since a translation of a line segment is still a line segment, we know that $\log|z-z_0|$ and $\log|z|$ have the same line segment BUO norm. This fact can be used to estimate  the line segment BUO norm of $\log|p|$ for general polynomials $p$. In fact, if we write
$$
p=a_0(z-a_1)^{n_1} \cdots (z-a_k)^{n_k},
$$
then
$$
{\sup}_{[a,b]}\, \log|p| \leq \log |a_0|+ \sum_{j=1}^k n_j {\sup}_{[a,b]}\, \log|z-a_j|
$$
and
$$
(\log|p|)_{[a,b]}=  \log |a_0|+ \sum_{j=1}^k n_j (\log|z-a_j|)_{[a,b]}.
$$
Thus
\begin{eqnarray*}
UO_{[a,b]} (\log|p|) & := & {\sup}_{[a,b]}\, \log|p|-(\log|p|)_{[a,b]}\\
& \leq & \sum_{j=1}^k n_j  UO_{[a,b]}(\log|z-a_j|).
\end{eqnarray*}
This combined with the fact $UO_{[a,b]}(\log|z-a_j|) \leq \gamma$  gives
\begin{equation}\label{eq:ss}
UO_{[a,b]} (\log |p|)  \leq \gamma \cdot \deg p.
\end{equation}
for all polynomials $p$ and all $a,b\in\mathbb C$.

Now we may conclude the proof of Theorem \ref{th:remez1} as follows.
 Since $A$ is compact, we may choose $z_0\in A$ such that
$$
|p(z_0)|={\sup}_{z\in A}\, |p(z)|.
$$
For every ray (half line), say $L$, starting from $z_0$, we see that $A\cap L$ is a line segment in view of convexity of  $A$. Let $L_{\mathbb C}$ be the complex line containing $L$. Apply \eqref{eq:ss} to $p|_{L_{\mathbb C}}$, we have
$$
UO_{A\cap L} (\log |p|)= UO_{A\cap L} (\log |p|_{L_{\mathbb C}}|) \leq \gamma\deg p|_{L_{\mathbb C}}\leq \gamma \deg p,
$$
which gives
$$
UO_{A} (\log |p|)  \leq \gamma \deg p
$$
since $UO_{A} (\log |p|) $ is a certain average of $UO_{A\cap L} (\log |p|)$ for all $L$ starting from $z_0$: in fact, since $z_0$ is a maximum point of $\log|p|$ on $A$ and $L$ contains $z_0$, we always have
$$
\sup_{A\cap L} \log|p|= \log|p(z_0)|,
$$
together with \eqref{eq:ss} it gives
\begin{equation}\label{eq:last}
\gamma\cdot \deg p\geq UO_{A\cap L} (\log |p|) = \log|p(z_0)| - \frac1{|A\cap L|} \int_{A\cap L} \log |p|.
\end{equation}
Thus
\begin{eqnarray*}
\int_A \log |p| & = & \int_{ S_{2n-1}}   \int_{A\cap L}  \log |p| \, d\mu(L)\\ 
& \ge & (\log|p(z_0)|-\gamma\cdot \deg p)\int_{ S_{2n-1}}  |A\cap L| \, d\mu(L)\\
& = & (\log|p(z_0)|-\gamma\cdot \deg p) |A|,
\end{eqnarray*}
where $d\mu$ is a certain measure on the unit sphere $S_{2n-1}$ and we identify the set of rays $L$ starting from $z_0$ with $S_{2n-1}$. Notice that the above inequality gives
$$
\gamma\cdot \deg p \geq \log|p(z_0)| - \frac1{|A|} \int_A \log |p|=UO_A (\log|p|),
$$
from which the assertion immediately follows.

\section{Proof of Theorem \ref{th:Main2}}

The starting point is the following

\begin{proposition}[John-Nirenberg inequality]\label{prop:J-N}
Suppose $\phi\in PSH(\Omega)$ and  $\Omega_0\subset\subset\Omega$ is open. For each  $a=(a_1,\cdots,a_n)$ with all $a_j>0$ there exists $\varepsilon_0=\varepsilon(a,\phi,\Omega_0,\Omega) >0$ such that
$$
\sup_{ P_{r^a}(\hat{z})\subset \Omega_0}  \frac{1}{|P_{r^a}(\hat{z})|}\int_{P_{r^a}(\hat{z})} e^{-\varepsilon(\phi-\sup_{P_{r^a}(\hat{z})} \phi)} <\infty,
$$
for every $\varepsilon \leq \varepsilon_0$. Here
$$
P_{r^a}(\hat{z})=\{z\in \mathbb C^n: |z_j-\hat{z}_j|\leq r^{a_j}\}.
$$
\end{proposition}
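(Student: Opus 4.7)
The plan is to combine the uniform BUO bound of Theorem \ref{th:Main1}(1) with the classical John--Nirenberg inequality, applied in the anisotropic homogeneous space naturally associated with the exponent vector $a$.

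First I would introduce the quasi-distance
$$
d_a(z,w):=\max_{1\le j\le n}|z_j-w_j|^{1/a_j},\qquad z,w\in\mathbb C^n,
$$
so that the polydiscs $P_{r^a}(\hat z)$ are precisely the closed $d_a$-balls of radius $r$. Lebesgue measure is doubling on this ball system because
$$
|P_{(2r)^a}(\hat z)|=2^{2(a_1+\cdots+a_n)}|P_{r^a}(\hat z)|,
$$
so $(\mathbb C^n,d_a,dV)$ is a space of homogeneous type with constants depending only on $a$. For $r\le 1$ one has $\max_j r^{a_j}=r^{\min_j a_j}$ and $\min_j r^{a_j}=r^{\max_j a_j}$, hence every $P_{r^a}(\hat z)$ is a polydisc of finite type $N_a:=\lceil\max_j a_j/\min_j a_j\rceil$. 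Fixing an intermediate open set $\Omega_0\subset\subset\Omega_0'\subset\subset\Omega$, every $P_{r^a}(\hat z)\subset\Omega_0$ with sufficiently small $r$ lies in $\mathcal P_{N_a}|_{\Omega_0'}$, and Theorem \ref{th:Main1}(1) produces a constant $C_0=C_0(a,\phi,\Omega_0',\Omega)$ with $\sup_P\phi-\phi_P\le C_0$ for every such $P$. By \eqref{eq:3} this also gives a uniform BMO bound $2C_0$ on the $d_a$-ball system.

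Now I would invoke the classical John--Nirenberg inequality for BMO functions on a space of homogeneous type: the doubling property combined with the uniform BMO bound produces constants $c_1,c_2>0$, depending only on $a$ and $C_0$, such that
$$
\bigl|\{z\in P:|\phi(z)-\phi_P|>t\}\bigr|\le c_1 e^{-c_2 t}|P|
$$
for every $P=P_{r^a}(\hat z)\subset\Omega_0$ of sufficiently small radius and every $t>0$. Integrating yields $|P|^{-1}\int_P e^{\varepsilon|\phi-\phi_P|}\le C$ for every $\varepsilon<c_2$, and combining with the pointwise inequality $\sup_P\phi-\phi\le C_0+|\phi-\phi_P|$ gives
$$
\frac{1}{|P|}\int_P e^{-\varepsilon(\phi-\sup_P\phi)}\le e^{\varepsilon C_0}\cdot\frac{1}{|P|}\int_P e^{\varepsilon|\phi-\phi_P|}\le C e^{\varepsilon C_0},
$$
as required. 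The residual range where $r$ is not small is handled by covering $\overline{\Omega_0}$ with finitely many small-$r$ polydiscs and pasting.

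The main obstacle is the transfer of the classical John--Nirenberg inequality to the $d_a$-ball setting. Because $d_a$ is only a quasi-metric and the polydiscs are not Euclidean balls, one cannot invoke the Euclidean version directly; however, the standard proof via Calder\'on--Zygmund / Whitney decomposition goes through verbatim in a doubling quasi-metric measure space, so that all that is actually needed is the doubling property recorded above together with the uniform BMO bound supplied by Theorem \ref{th:Main1}(1). If one prefers to argue by hand, the usual stopping-time iteration can be carried out directly on polydiscs of finite type $N_a$, using that halving each radius preserves the ratio $\max_j r_j/\min_j r_j$ and therefore keeps every sub-polydisc inside $\mathcal P_{N_a}|_{\Omega_0'}$, so that the uniform BUO constant $C_0$ is available along the entire iteration tree.
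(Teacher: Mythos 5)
Your proposal is correct and follows essentially the same route as the paper's Appendix: introduce the anisotropic quasi-distance so that the $P_{r^a}$ become balls in a doubling quasi-metric measure space, use Theorem~\ref{th:Main1}(1) to get a uniform BMO/BUO bound on this ball family, and then apply the John--Nirenberg exponential inequality in this homogeneous-type setting. The only difference is that where you invoke the classical John--Nirenberg theorem for spaces of homogeneous type as a black box, the paper proves it explicitly by carrying out the Whitney covering and Calder\'on--Zygmund stopping-time iteration (Lemma~\ref{lm:balls} and Proposition~\ref{prop:C-Z}) in the anisotropic setting, then converting the resulting distributional estimate into the desired bound on $\frac{1}{|P|}\int_P e^{-\varepsilon(\phi-\sup_P\phi)}$ exactly as you do.
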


Although the argument is fairly standard, we will provide a proof in Appendix, because the result cannot be found in literature explicitly.

\begin{lemma}\label{lm:BergmanEstimate}
Let $\psi$ be a psh function on $\Omega$ which satisfies  $\sup_{\Omega} \psi <\infty$ and $\int_\Omega e^{-\psi} <\infty$. Suppose $\Omega$ is circular, i.e., $\zeta z\in \Omega$ for every $\zeta\in \mathbb C$, $|\zeta|\leq 1$, and $z\in \Omega$. Then
\begin{equation}\label{eq:BergmanEstimate}
\left(\frac{1}{|\Omega|} \int_{\Omega}  e^{-(\psi-\sup_{\Omega} \psi)} \right)^{-1} \leq K_{\psi, \,\Omega}(0) \cdot |\Omega|\cdot e^{-\sup_\Omega \psi} \leq 1.
\end{equation}
\end{lemma}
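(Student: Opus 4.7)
The plan is to reformulate the double inequality transparently and then verify each half. Set $M := \sup_\Omega \psi$ (finite by hypothesis). Since $\int_\Omega e^{-(\psi-M)} = e^M \int_\Omega e^{-\psi}$, the claimed estimate \eqref{eq:BergmanEstimate} is equivalent to
\[
\frac{1}{\int_\Omega e^{-\psi}} \;\le\; K_{\psi,\Omega}(0) \;\le\; \frac{e^M}{|\Omega|},
\]
and both sides are finite (in particular the hypotheses force $|\Omega|<\infty$, since $\int_\Omega e^{-\psi}\ge|\Omega|e^{-M}$). For the lower bound I would test the Bergman extremal problem against the constant $f\equiv (\int_\Omega e^{-\psi})^{-1/2}$, which is holomorphic, satisfies $\int_\Omega |f|^2 e^{-\psi} = 1$, and has $|f(0)|^2 = 1/\int_\Omega e^{-\psi}$; the defining supremum of $K_{\psi,\Omega}(0)$ then yields the lower bound at once.

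For the upper bound the key input is the circular-domain mean value inequality
\[
|f(0)|^2 \;\le\; \frac{1}{|\Omega|}\int_\Omega |f|^2, \qquad f \in \mathcal{O}(\Omega) \cap L^2(\Omega).
\]
Granting this, $\psi\le M$ gives $\int_\Omega |f|^2 \le e^M \int_\Omega |f|^2 e^{-\psi} \le e^M$ for every $f$ admissible in the extremal problem, whence $|f(0)|^2 \le e^M/|\Omega|$; taking the supremum delivers $K_{\psi,\Omega}(0) \le e^M/|\Omega|$.

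It remains to establish the mean value inequality, and this is the only step where the circular hypothesis is really used. Fix $z\in\Omega$ and $r\in(0,1)$. The assumption that $\zeta z\in\Omega$ for $|\zeta|\le 1$ makes $\zeta\mapsto|f(\zeta z)|^2$ subharmonic on the open unit disc, so
\[
|f(0)|^2 \;\le\; \frac{1}{2\pi}\int_0^{2\pi} |f(re^{i\theta} z)|^2\, d\theta.
\]
Integrating in $z\in\Omega$, swapping the order of integration, and substituting $w=re^{i\theta}z$ (so $dV(w)=r^{2n}\,dV(z)$, and $re^{i\theta}\Omega = r\Omega \subset \Omega$ by the $S^1$-invariance and circularity) give
\[
|f(0)|^2\,|\Omega| \;\le\; \frac{1}{r^{2n}} \int_{r\Omega} |f|^2 \;\le\; \frac{1}{r^{2n}} \int_\Omega |f|^2.
\]
Letting $r\to 1^-$ finishes the argument; this passage to the limit, handled by monotone convergence on $r\Omega\nearrow\Omega$, is the only delicate point, and the rest of the proof is bookkeeping.
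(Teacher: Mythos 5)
Your proof is correct and essentially matches the paper's. The lower bound via the constant test function is identical; for the upper bound the paper uses the mean-value equality $f(0)=\frac{1}{|\Omega|}\int_\Omega f$ for holomorphic functions on a circular domain followed by Cauchy--Schwarz against the weight $e^{-\psi/2}\cdot e^{\psi/2}$ and the bound $\int_\Omega e^\psi\le e^{M}|\Omega|$, whereas you obtain the equivalent $L^2$ mean-value inequality $|f(0)|^2\le\frac{1}{|\Omega|}\int_\Omega|f|^2$ directly from subharmonicity of $|f|^2$ on slices and then use $\psi\le M$; the two routes are interchangeable one-line variants of the same circular-domain averaging idea. Your remark that $|\Omega|<\infty$ follows from $\int_\Omega e^{-\psi}\ge e^{-M}|\Omega|$ is a useful sanity check the paper leaves implicit.
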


\begin{proof}
The extremal  property of the Bergman kernel implies that
$$
K_{\psi, \,\Omega}(0)  \geq \frac{1}{\int_{\mathbb D_r} e^{-\psi}}
$$
and the first inequality in (\ref{eq:BergmanEstimate}) holds.
On the other hand, as $\Omega$ is circular, it is easy to verify that
$$
f(0)=\frac{1}{|\Omega|} \int_{\Omega} f
$$
for all $f\in {\mathcal O}(\Omega)$. Thus we have
$$
|f(0)|^2 = \left|\,\frac{1}{|\Omega|} \int_{\Omega} f\,\right|^2 \leq \frac{1}{|\Omega|}  \int_{\Omega}  |f|^2e^{-\psi} \cdot  e^{\sup_\Omega \psi},
$$
so that the second inequality in (\ref{eq:BergmanEstimate}) also holds.
\end{proof}

\begin{proof}[Proof of Theorem \ref{th:Main2}]
Since
$$
\int_{\mathbb D^n} |f|^2e^{-\varepsilon \phi^t} =|t_1\cdots t_n|^{-2}\int_{\mathbb D_t^n} |f|^2e^{-\varepsilon \phi},\ \ \ \forall\,f\in {\mathcal O}(\Omega),
$$
it follows that
\begin{equation}\label{eq:2.5}
K_{\varepsilon \phi^t, \,\mathbb D^n}(z)=\frac{|\mathbb D^n_t|}{|\mathbb D^n|} \cdot K_{\varepsilon\phi, \,\mathbb D^n_t}(z),\end{equation}
where
$$
\mathbb D^n_t:=\left\{z\in {\mathbb C}^n: |z_j|<|t_j|, \ 1\le j\le n \right\}.
$$
Thus we have
$$
F(\varepsilon \phi)(t,0)=\log (|\mathbb D^n_t|\cdot K_{\varepsilon\phi, \,\mathbb D^n_t}(0))-n \log \pi.
$$
This combined with Lemma \ref{lm:BergmanEstimate} gives
$$
-\log \left(\frac{1}{|\mathbb D^n_t|} \int_{\mathbb D^n_t}  e^{-\varepsilon(\phi-\sup_{\mathbb D^n_t} \phi)}\right)-n \log \pi
 \leq F(\varepsilon \phi)(t,0) - \varepsilon\, {\sup}_{\mathbb D^n_t}\, \phi \leq -n \log \pi.
$$
By Proposition \ref{prop:J-N}, we conclude the proof.
\end{proof}

\section{Proof of Theorem \ref{th:Main3}}
Recall that
$$
\phi^t(z):=\phi(t_1z_1,\cdots, t_nz_n).
$$
By Proposition 2.2 in \cite{Bern17}, we have
\begin{equation}\label{eq:2.6}
\frac{\partial}{\partial t_j}  K_{\phi^t, \,\mathbb D^n}(0)=\int_{\mathbb D^n} \frac{\partial \phi^t}{\partial t_j}\,|K_{\phi^t, \,\mathbb D^n}(z,0)|^2 e^{-\phi^t},
\end{equation}
where  $K_{\phi^t, \,\mathbb D^n}(z,0)$  satisfies the following reproducing property
$$
f(0)=\int_{\mathbb D^n } f(z)\overline{K_{\phi^t, \,\mathbb D^n}(z,0)}e^{-\phi^t}
$$
for all $L^2$ holomorphic functions $f$ on $\mathbb D^n$. In particular, if $f=zK_{\phi^t, \,\mathbb D^n}(z,0)$ then
$$
0=\int_{\mathbb D^n} z\cdot |K_{\phi^t, \,\mathbb D^n}(z,0)|^2 e^{-\phi^t},
$$
and since
$
\frac{\partial \phi^t}{\partial t_j}|_{t=0}=z_j\phi_{z_j}(0),
$
we get
$$
\left.\int_{\mathbb D^n} \frac{\partial \phi^t}{\partial t_j}\right|_{t=0} |K_{\phi^t, \,\mathbb D^n}(z,0)|^2 e^{-\phi^t}=0
$$
for all $t\in\mathbb D^n$. Thus we may write \eqref{eq:2.6} as
$$
\frac{\partial}{\partial t_j} K_{\phi^t, \,\mathbb D^n}(0)
=\int_{\mathbb D^n} \left(\frac{\partial \phi^t}{\partial t_j}-\left.\frac{\partial \phi^t}{\partial t_j}\right|_{t=0}\right)|K_{\phi^t, \,\mathbb D^n}(z,0)|^2 e^{-\phi^t}.
$$
In particular,
$$
\left.\frac{\partial}{\partial t_j} K_{\phi^t, \,\mathbb D^n}(0)\right|_{t=0}=0.
$$
Thus we can further write \eqref{eq:2.6} as
$$
\frac{\partial}{\partial t_j} K_{\phi^t, \,\mathbb D^n}(0)-\left.\frac{\partial}{\partial t_j} K_{\phi^t, \,\mathbb D^n}(0)\right|_{t=0}=\int_{\mathbb D^n} \left(\frac{\partial \phi^t}{\partial t_j}-\left.\frac{\partial \phi^t}{\partial t_j}\right|_{t=0}\right)|K_{\phi^t, \,\mathbb D^n}(z,0)|^2 e^{-\phi^t},
$$
which implies
$$
\left.\frac{\partial^2}{\partial t_j \partial \bar t_k} K_{\phi^t, \,\mathbb D^n}(0)\right|_{t=0}=\int_{\mathbb D^n} \left.\frac{\partial^2 \phi^t}{\partial t_j\partial \bar t_k}\right|_{t=0}\cdot |K_{\phi(0), \,\mathbb D^n}(z,0)|^2 e^{-\phi(0)}.
$$
Since
$$
K_{\phi(0), \,\mathbb D^n}(z,0)=\frac{e^{\phi(0)}}{\pi^n}
$$
and
$$
\left.\frac{\partial^2 \Phi}{\partial t_j\partial \bar t_k}\right|_{t=0}=z_j\bar z_k \phi_{z_j \bar z_k}(0),
$$
we get
$$
\left.\frac{\partial^2}{\partial t_j \partial \bar t_k} K_{\phi^t, \,\mathbb D^n}(0)\right|_{t=0}=\frac{e^{\phi(0)}\phi_{z_j \bar z_k}(0)}{\pi^{2n}}\int_{\mathbb D^n} z_j \bar z_k.
$$
Notice that
$$
\int_{\mathbb D^n} z_j \bar z_k=
\begin{cases}
\begin{array}{ll}
\pi^{n}/2 & \text{if}\ \, j=k\\
0 & \text{if}\ \, j\neq k,
\end{array}
\end{cases}
$$
and
$$
\left.\frac{\partial^2}{\partial t_j \partial \bar t_k} K_{\phi^t, \,\mathbb D^n}(0)\right|_{t=0}
= K_{\phi^t, \,\mathbb D^n}(0)\cdot \left.\frac{\partial^2}{\partial t_j \partial \bar t_k} \log K_{\phi^t, \,\mathbb D^n}(0)\right|_{t=0},
$$
our assertion follows.

\section{Appendix}

In this section we provide a proof of Proposition \ref{prop:J-N}. Let us first recall a few basic facts in real-variable theory, by following Stein \cite{Stein}. A\/ {\it quasi-distance} \/ defined on ${\mathbb R}^m$ means a nonnegative continuous function $\rho$ on ${\mathbb R}^m\times
\mathbb R^m$ for which there exists a constant $c>0$ such that
\begin{enumerate}
\item $\rho(x,y)=0$ iff $x=y$;
\item $\rho(x,y)\le c\rho(y,x)$;
\item $\rho(x,y)\le c(\rho(x,z)+\rho(y,z))$.
\end{enumerate}
Given such a $\rho$, we define "balls"
$$
B(x,r):=\{y:\rho(y,x)<r\},\ \ \ r>0.
$$
One can verify that there exists a constant $c_1>1$ such that for all $x,y$ and $r$,
\begin{equation}\label{eq:App1}
B(x,r)\cap B(y,r)\neq  \emptyset\ \ \  \Rightarrow \ \ \ B(y,r)\subset B(x,c_1r).
\end{equation}
In the case of Proposition \ref{prop:J-N}, we define
$$
\rho(z,w)=\max_k |z_k-w_k|^{1/a_k},\ \ \ z,w\in {\mathbb C}^n.
$$
It is easy to verify that $\rho$ is a quasi-distance on ${\mathbb C}^n$ and
$$
B(\hat{z},r)=P_{r^a}(\hat{z}),\ \ \ \hat{z}\in \mathbb C^n,\, r>0.
$$
Besides (\ref{eq:App1}), the following properties also hold for $B(\hat{z},r)$:
\begin{equation}\label{eq:App2}
|B(\hat{z},c_1r)|\le c_1^{2\sum_k a_k} \cdot |B(\hat{z},r)|=:c_2\, |B(\hat{z},r)|;
\end{equation}
\begin{equation}\label{eq:App3}
{\bigcap}_r \, \overline{B}(\hat{z},r)=\{\hat{z}\}\ \ \ {\text{and}\ \ \ }  {\bigcup}_r\, B(\hat{z},r)=\mathbb C^n;
\end{equation}
\begin{equation}\label{eq:App4}
\text{For each open set\/ $U$\/ and each $r>0$, the function $\hat{z}\mapsto |B(\hat{z},r)\cap U|$ is continuous.}
\end{equation}
Fix a pair of positive constants $c^\ast$ and $c^{\ast\ast}$ with $1<c^\ast<c^{\ast\ast}$. For $B=B(\hat{z},r)$ we define $B^\ast=B(\hat{z},c^\ast r)$ and $B^{\ast\ast}=B(\hat{z},c^{\ast\ast} r)$. Then we have

\begin{lemma}[cf. \cite{Stein}, p.\,15--16]\label{lm:balls}
Choose $c^\ast=4c_1^2$ and $c^{\ast\ast}=16c_1^2$. Given a closed nonempty set $F\subset {\mathbb C}^n$, there exists a collection of balls $\{B_k\}$ such that
\begin{enumerate}
\item The $B_k$ are pairwise disjoint;
\item $\bigcup_k B_k^\ast = F^c:=\mathbb C^n\backslash F$;
\item $B_k^{\ast\ast}\cap F\neq \emptyset$ for each $k$.
\end{enumerate}
\end{lemma}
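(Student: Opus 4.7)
The plan is to carry out a Vitali-type covering argument adapted to the quasi-distance $\rho$, mimicking the classical Whitney decomposition but with the ordinary triangle inequality replaced by properties (2) and (3) of $\rho$ and the inclusion (\ref{eq:App1}).

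First, for each $x\in F^c$ I would define a radius $r(x)>0$ calibrated against the $\rho$-distance from $x$ to $F$: concretely, I would choose $r(x)$ so that $B(x,4c_1^2\,r(x))=B(x,c^{*}r(x))$ is still contained in $F^c$ while $B(x,16c_1^2\,r(x))=B(x,c^{**}r(x))$ meets $F$. Since $F$ is closed and $\rho(x,F)>0$ for $x\in F^c$, such an $r(x)$ exists; the specific numerical factors $4c_1^2$ and $16c_1^2$ are imposed so that property (3) of the conclusion will be automatic. This choice immediately secures conclusion (3): $B_k^{**}\cap F\neq\emptyset$ by construction.

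Next I would extract from the family $\{B(x,r(x))\}_{x\in F^c}$ a countable disjoint subcollection $\{B_k\}=\{B(x_k,r(x_k))\}$ enjoying the following maximality property: every ball $B(x,r(x))$ in the original family intersects some $B_k$ with $r(x_k)\geq r(x)/2$. I would obtain such a subcollection by a standard greedy exhaustion, bucketing balls by the dyadic scale $r(x)\in[2^{-j-1},2^{-j}]$ and, for each $j$ in increasing order, selecting a maximal disjoint family at that scale subject to disjointness from previously selected balls. The doubling property (\ref{eq:App2}) together with separability of $\mathbb{C}^n$ ensures that the resulting collection is countable, and disjointness is conclusion (1).

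The main content is conclusion (2): $\bigcup_k B_k^{*}\supset F^c$. Given $x\in F^c$, by maximality there is some $B_k=B(x_k,r_k)$ which meets $B(x,r(x))$ with $r_k\geq r(x)/2$. Then $B(x,r(x))\subset B(x,2r_k)$; since $B(x,2r_k)$ and $B(x_k,2r_k)$ share a point, two applications of (\ref{eq:App1}) (first upgrading both balls to radius $2c_1 r_k$, then invoking the inclusion at that radius) place $x$ inside $B(x_k,4c_1^2\,r_k)=B_k^{*}$, which is exactly the inclusion needed. The precise factor $4c_1^2$ is the reason this constant appears in the statement.

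The main obstacle is the bookkeeping of constants: every use of the quasi-triangle inequality costs a factor of $c_1$, so each step above must be done with care to land on precisely $c^{*}=4c_1^2$ and $c^{**}=16c_1^2$ rather than some larger power of $c_1$. A secondary subtlety is that a naive Zorn-type maximal selection produces an uncountable disjoint family; the dyadic bucketing combined with the doubling estimate (\ref{eq:App2}) is what forces countability and at the same time delivers the Vitali-style maximality $r_k\geq r(x)/2$ used in the covering step.
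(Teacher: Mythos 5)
Your construction is exactly the standard Whitney--Vitali argument, and since the paper does not prove this lemma but cites Stein (p.\ 15--16), the comparison should be with Stein's proof --- which, as far as I can tell, proceeds in the same way: calibrate $r(x)$ against $\rho(x,F)$ so that $B(x,c^*r(x))\subset F^c$ and $B(x,c^{**}r(x))\cap F\neq\emptyset$, extract a disjoint subfamily by the covering lemma, and read off conclusions (1)--(3) from the calibration plus the engulfing property \eqref{eq:App1}. Your bookkeeping of the covering step is fine: if $B(x,r(x))$ meets a selected $B(x_k,r_k)$ with $r_k\geq r(x)/2$, then $B(x,r(x))\subset B(x,2r_k)$, and since $B(x,2r_k)$ and $B(x_k,2r_k)$ share a point, \eqref{eq:App1} already gives $B(x,2r_k)\subset B(x_k,2c_1r_k)\subset B_k^*$; the intermediate ``upgrade to radius $2c_1r_k$'' is unnecessary, but harmless.

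One point you should not wave away as a ``secondary subtlety'': the dyadic greedy selection you describe --- ``for each $j$ in increasing order'' --- presupposes that the radii $r(x)$ are bounded above, so that there is a largest dyadic scale to start from. For a general closed nonempty $F$ this fails: if, say, $F$ is a single point, then $r(x)\sim\rho(x,F)\to\infty$ as $x\to\infty$, the index $j$ has no minimum, and the recursion never begins. (This is precisely the hypothesis $\sup_\alpha r(B_\alpha)<\infty$ in Stein's covering lemma.) Two standard ways to close this: (a) observe that for the \emph{Whitney} family the radius comparability does not come from the selection order at all --- if $B(x,r(x))$ meets $B(x',r(x'))$ then, using the quasi-triangle inequality together with $4c_1^2r(\cdot)\leq\rho(\cdot,F)<16c_1^2r(\cdot)$, one gets $C^{-1}\leq r(x)/r(x')\leq C$ for a constant $C$ depending only on $c_1$; so a Zorn-maximal disjoint subfamily already does the job (possibly at the cost of a slightly larger dilation constant than $4c_1^2$); or (b) note that in the only place the lemma is used in this paper --- the Calder\'on--Zygmund decomposition of Proposition~\ref{prop:C-Z} --- one has $F^c=E_\alpha\subset B_0$, hence bounded radii, and your greedy selection applies verbatim. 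Either remark should accompany the proof if one wants the lemma in the generality stated.

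One more small inaccuracy: countability of the disjoint subfamily does not follow from ``the doubling property together with separability''; it follows from the fact that the balls have positive, locally finite Lebesgue measure (any disjoint family of sets of positive measure in a $\sigma$-finite space is countable). Doubling is what you use later, for Proposition~\ref{prop:C-Z}, not here.
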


 \begin{proposition}[Calder\'on-Zygmund decomposition]\label{prop:C-Z}
Let $B_0$ be a ball in $\mathbb C^n$ and  $f\in L^1(B_0)$. There is a constant $c=c(c_1,c_2)>0$ such that given a positive number $\alpha$, there exists  a sequences of
balls $\{B_k^\ast\}$ in $B_0$ such that
\begin{enumerate}
\item $|f(z)|\le \alpha$, for a.e. $z\in B_0\backslash\bigcup_k B_k^\ast$;
\item $
\int_{B_k^\ast} |f|\le c\alpha |B_k^\ast|,
$
for each $k$;
\item $\sum_k |B_k^\ast| \le \frac{c}{\alpha} \int_{B_0} |f|$.
\end{enumerate}
\end{proposition}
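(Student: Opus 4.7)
The plan is to carry out the classical Calder\'on--Zygmund decomposition adapted to the space of homogeneous type $(\mathbb C^n,\rho,dV)$, using Lemma \ref{lm:balls} in place of the usual dyadic stopping-time construction. After extending $f$ by zero outside $B_0$, I introduce the centered Hardy--Littlewood maximal function
$$
Mf(z) := \sup_{r>0} \frac{1}{|B(z,r)|} \int_{B(z,r)} |f|
$$
and work with the sublevel set $F := \{z\in \mathbb C^n : Mf(z) \le \alpha\}$, which is closed thanks to the lower semicontinuity of $Mf$, itself a consequence of (\ref{eq:App4}).

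The first substantive step is to establish the weak-type $(1,1)$ estimate $|F^c| \le (c'/\alpha)\int_{B_0}|f|$ by a Vitali-type argument: given a compact $K \subset F^c$, each $z \in K$ admits a ball $B_z$ with $\int_{B_z}|f| > \alpha|B_z|$; an almost-disjoint subfamily is extracted by greedy radius selection using (\ref{eq:App1}), and $K$ is recovered after expanding each selected ball by a uniform factor, which costs only a doubling constant from (\ref{eq:App2}). The Lebesgue differentiation theorem in this setting then follows from the weak bound combined with approximation by continuous functions and (\ref{eq:App3}), yielding $|f(z)| \le Mf(z) \le \alpha$ for a.e.\ $z \in F$, which takes care of conclusion (1).

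With this preparation, I apply Lemma \ref{lm:balls} to $F$ (taking the empty collection if $F^c = \emptyset$) to produce pairwise disjoint balls $\{B_k\}$ with $\bigcup_k B_k^\ast = F^c$ and $B_k^{\ast\ast}\cap F \neq \emptyset$, and retain only those $B_k^\ast$ that meet $B_0$. For (2), pick $z_k \in B_k^{\ast\ast} \cap F$; the quasi-triangle inequality places $B_k^{\ast\ast}$ inside some ball centered at $z_k$ of comparable radius, hence of comparable measure by iterated doubling, and the bound $Mf(z_k)\le\alpha$ gives
$$
\int_{B_k^\ast} |f| \le \int_{B_k^{\ast\ast}} |f| \le c\alpha |B_k^\ast|.
$$
For (3), disjointness of $\{B_k\}$ together with doubling yields $\sum_k |B_k^\ast| \le c_2\sum_k |B_k| \le c_2|F^c| \le (c''/\alpha)\int_{B_0}|f|$.

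The only non-trivial ingredient is the Vitali-type covering argument behind the weak-type $(1,1)$ estimate for $M$; once that is in hand, the rest of the proof is a mechanical unpacking of Lemma \ref{lm:balls} combined with doubling. A secondary technical nuisance is ensuring that the retained balls are appropriately localized to $B_0$, but this is handled by the discarding step and does not affect the universal constant $c=c(c_1,c_2)$.
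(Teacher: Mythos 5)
Your proposal is correct and follows essentially the same strategy as the paper: extend $f$ by zero, pass to a sublevel/superlevel set of a Hardy--Littlewood maximal function, invoke the weak-type $(1,1)$ bound and Lebesgue differentiation for conclusion (1), and then read off (2) and (3) from the Whitney covering of Lemma~\ref{lm:balls} applied to the closed set $F$ together with doubling. The one cosmetic difference is that the paper works with the \emph{uncentered} maximal function $\widetilde{M}f$ (alongside $Mf$, via the comparison $Mf\le\widetilde Mf\le c_2 Mf$), so that for (2) the point $z_k\in B_k^{**}\cap F$ gives $\int_{B_k^{**}}|f|\le\alpha|B_k^{**}|$ immediately, whereas your centered-only version needs the extra quasi-triangle-plus-iterated-doubling step to compare $B_k^{**}$ with a ball centered at $z_k$; and the paper cites Stein for the weak-type bound rather than rederiving it via Vitali. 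Both variants are sound and yield the same constant dependence $c=c(c_1,c_2)$.
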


\begin{proof}
We extend $f$ to an integrable function on ${\mathbb C}^n$ by setting $f=0$ outside $B_0$.
Recall  the following two types of Hardy-Littlewood maximal functions:
$$
Mf(z):=\sup_{r>0}\,  \frac1{|B(z,r)|} \int_{B(z,r)} |f|,
$$
$$
\widetilde{M}f(z):=\sup_{z\in B}\, \frac1{|B|} \int_B |f|
$$
where the supremum is taken over all balls $B$ containing $z$. The relationship between $Mf$ and $\widetilde{M}f$ is as follows:
\begin{equation}\label{eq:compare}
Mf \le \widetilde{M}f\le c_2\, Mf.
\end{equation}
Notice that
$$
E_\alpha:=\left\{z\in B_0: \widetilde{M}f(z)>\alpha\right\}
$$
is an open set  since $\widetilde{M}f$ is lower semicontinuous, and
$$
|E_\alpha|\le \frac{c}{\alpha} \int_{\mathbb C^n}|f| =  \frac{c}{\alpha} \int_{B_0}|f|
$$
in view of (\ref{eq:compare}) and \cite{Stein}, p. 13, Theorem 1.  Here and in what follows $c$ will denote a generic positive constant depending only on $c_1,c_2$. With $F:=\mathbb C^n\backslash E_\alpha$  we choose balls $\{B_k\}$, $\{B_k^\ast\}$ and $\{B_k^{\ast\ast}\}$ according to Lemma \ref{lm:balls}. Then we have
 $$
 {\sum}_k |B_k^\ast| \le  c \, {\sum}_k  |B_k| \le c\,|E_\alpha| \le \frac{c}{\alpha} \int_{B_0} |f|.
  $$
  Since $B_k^{\ast\ast}\cap F\neq \emptyset$ for each $k$, we have
  $$
  \int_{B_k^\ast} |f|\le  \int_{B_k^{\ast\ast}} |f| \le \alpha |B_k^{\ast\ast}| \le c\alpha |B_k^\ast|.
      $$
      Finally, by (\ref{eq:compare}) and \cite{Stein}, p.\,13, Corollary, we know that  $|f(z)|\le \widetilde{M}f(z)$ for a.e. $z$, from which  {\it (1)} immediately follows.
  \end{proof}

\begin{proof}[Proof of Proposition \ref{prop:J-N}]
By Theorem \ref{th:Main1}, we know that
$$
M:=\sup_{B(\hat{z},r)\subset \Omega_0} \frac1{|B(\hat{z},r)|} \int_{B(\hat{z},r)} |\phi-\phi_{B(\hat{z},r)}|<\infty.
$$
Assume without loss of generality $M=1$.  Fix a ball $B_0\subset \Omega_0$. It suffices to show
\begin{equation}\label{eq:6.6}
|\{z\in B_0: |\phi-\phi_{B_0}|>t\}|\le {\rm const}\cdot e^{-\varepsilon t}\,|B_0|,\ \ \ t>0,
\end{equation}
for certain $\varepsilon\ll 1$.
With $c$ as Proposition \ref{prop:C-Z} we choose
$$
\alpha>c>1\ge \frac1{|B_0|}\int_{B_0} |\phi-\phi_{B_0}|.
$$
Applying Proposition \ref{prop:C-Z} with
$
f=
|\phi-\phi_{B_0}|,
$
 we have a sequence of balls $\{B_k^{(1)}\}$ in $B_0$ such that
$$
|\phi(z)-\phi_{B_0}|\le \alpha \ \ \ \text{a.e.}\ \ \ z\in B_0\backslash {\bigcup}_k B_k^{(1)},
$$
$$
{\sum}_k |B_k^{(1)}|\le \frac{c}{\alpha} \int_{B_0} |\phi-\phi_{B_0}|\le  \frac{c}{\alpha}\,|B_0|,
$$
and
$$
|\phi_{B_k^{(1)}}-\phi_{B_0}|\le \frac1{|B_k^{(1)}|} \int_{B_k^{(1)}} |\phi-\phi_{B_0}|\le c\alpha.
$$
Applying Proposition \ref{prop:C-Z} with
$
f=
|\phi-\phi_{B_k^{(1)}}|
$
for each $k$, we obtain a sequence of balls $\{B_k^{(2)}\}$ in $\bigcup_k B_k^{(1)}$ such that
$$
{\sum}_k |B_k^{(2)}|\le \frac{c}{\alpha} \,{\sum}_k \int_{B_k^{(1)}} |\phi-\phi_{B_k^{(1)}}|\le \frac{c}{\alpha} \,{\sum}_k
|B_k^{(1)}| \le \frac{c^2}{\alpha^2} \, |B_0|
$$
and
$$
|\phi(z)-\phi_{B_k^{(1)}}|\le \alpha \ \ \ \text{a.e.}\ \ \ z\in B_k^{(1)}\backslash {\bigcup}_k B^{(2)}_k,
$$
which in turn implies
$$
|\phi(z)-\phi_{B_0}|\le 2\cdot c\alpha \ \ \ \text{a.e.}\ \ \ z\in B_0\backslash {\bigcup}_k B^{(2)}_k.
$$
Continue this process. For each $j$ there exists a sequence of balls $\{B_k^{(j)}\}$ in $\bigcup_k B_k^{(j-1)}$ such that
$$
{\sum}_k |B_k^{(j)}|\le \frac{c^j}{\alpha^j} \, |B_0|,
$$
$$
|\phi(z)-\phi_{B_0}|\le j\cdot c\alpha \ \ \ \text{a.e.}\ \ \ z\in B_0\backslash {\bigcup}_k B^{(j)}_k.
$$
Thus
$$
|\{z\in B_0:|\phi-\phi_{B_0}|>j\cdot c\alpha\}|\le {\sum}_k |B_k^{(j)}|\le \frac{c^j}{\alpha^j} \, |B_0|.
$$
For any $t$ there exists an integer $j$ such that $t\in [j\cdot c\alpha,(j+1)\cdot c\alpha)$. It follows that
$$
(c/\alpha)^j=(\alpha/c)e^{-(j+1)\log \alpha/c}\le (\alpha/c)e^{-\frac{\log \alpha/c}{c\alpha} \, t},
$$
from which \eqref{eq:6.6} immediately follows. Now we have
$$
\frac{1}{|B_0|} \int_{B_0} e^{\varepsilon |\phi-\phi_{B_0}|}  = \frac{1}{|B_0|} \int_{0}^\infty  e^{\varepsilon t} |\{|\phi-\phi_{B_0}|>t\}| +  \frac{ |\{\phi=\phi_{B_0}\}| }{|B_0|} \leq  {\rm const}+1,
$$
which gives
$$
\frac{1}{|B_0|} \int_{B_0} e^{-\varepsilon (\phi-\sup_{B_0}\phi)}  \leq ({\rm const}+1) e^{\varepsilon(\sup_{B_0}\phi-\phi_{B_0})},
$$
By Theorem 1.1, $\sup_{B_0}\{\sup_{B_0}\phi-\phi_{B_0}\} <\infty$, thus Proposition \ref{prop:J-N} follows.
\end{proof}

\subsection*{Acknowledgments} The authors  would like to thank Ahmed Zeriahi for bringing their attention to  the reference \cite{BJZ}.
The second author would like to thank Bo Berndtsson for numerous useful discussions about the topics of this paper.


\begin{thebibliography}{99}


\bibitem{BJZ} S. Benelkourchi, B. Jennane and A. Zeriahi, {\it Polya's inequalities, global uniform integrability and the size of
plurisubharmonic lemniscates}, Ark. Mat. {\bf 43} (2005), 85--112.



\bibitem{Bern06} B. Berndtsson, {\it Subharmonicity properties of the Bergman kernel and some other functions associated to pseudoconvex domains}, Ann. Inst. Fourier (Grenoble), {\bf 56} (2006), 1633--1662.

\bibitem{Bern17} B. Berndtsson, {\it A Comparison Principle for Bergman Kernels}, Analysis Meets Geometry: A Tribute to Mikael Passare,
Trends in Mathematics, 2017,  121--126.

\bibitem{BL} B. Berndtsson and L. Lempert, {\it A proof of the Ohsawa--Takegoshi theorem with sharp estimates}, J. Math. Soc. Japan {\bf 68} (2016), 1461--1472.

\bibitem{Blocki} Z. Blocki, {\it Suita's conjecture and the Ohsawa--Takegoshi extension theorem}, Invent. Math. {\bf 193} (2013),
149--158.

  \bibitem{Brudnyi} A. Brudnyi, {\it Local inequalities for plurisubharmonic functions}, Ann. of Math. {\bf 149} (1999), 511--533.

  \bibitem{Brudnyi1} A. Brudnyi, {\it On a BMO--property for subharmonic functions}, J. Fourier Anal. Appl. {\bf 8} (2002), 603--612.


  \bibitem{BG} Yu. Brudnyi and M. Ganzburg, {\it On an extremal problem for polynomials of $n$ variables}, Math. USSR Izv. {\bf 37} (1973), 344--355.


\bibitem{Chen} B.Y. Chen, {\it Weighted Bergman kernel, directional Lelong number and John--Nirenberg exponent}, 	arXiv:1809.00435.

\bibitem{CR} R. R. Coifman and R. Rochberg. {\it Another characterization of BMO},  Proc. Amer.  Math. Soc. {\bf 79} (1980), 249--254.

\bibitem{Demailly} J. P. Demailly, {\it Complex analytic and differential geometry}, open book available in his homepage.

\bibitem{GZ} Q. Guan and X. Zhou, {\it A solution of an $L^2$-extension problem with an optimal estimate and applications}, Ann. of Math. {\bf 181} (2015), 1139--1208.

\bibitem{JN} F. John and L. Nirenberg, {\it On functions of bounded mean oscillation}, Comm. Pure  Appl. Math. {\bf 14} (1961), 415--426.

\bibitem{K94} C. O. Kiselman, {\it Attenuating the singularities of plurisubharmonic functions}, Ann. Polon. Math. {\bf 60} (1994), 173--197.

\bibitem{Nazarov} F. Nazarov, M. Sodin and A. Vol'berg, {\it The geometric Kannan-Lov\'asz-Simonovits lemma, dimension-free estimates for volume of subleval sets of polynomials, and distribution of zeros of random analytic functions}, arXiv: math/0108212v2.

\bibitem{Ose13} A. Osekowski,  {\it Sharp estimates for functions of bounded lower oscillation}, Bull. Aust. Math. Soc. {\bf 87} (2013), 68--81.

 \bibitem{Stein} E. M. Stein, Harmonic Analysis: Real-Variable Methods, Orthogonality, and Oscillatory Integrals, Princeton Univ. Press, Princeton, New Jersey, 1993.

\end{thebibliography}
\end{document}